\newtheorem{theorem}{Theorem}
\newtheorem{proposition}{Proposition}
\newtheorem{lemma}{Lemma}
\newtheorem{corollary}{Corollary}
\theoremstyle{definition}
\newtheorem{definition}{Definition}
\newtheorem{remark}{Remark}
\newtheorem{example}{Example}
\newcommand{\R}{\mathbb{R}}
\DeclareMathOperator*{\rank}{rank}
\DeclareMathOperator*{\diag}{diag}
\begin{document}

\title{Tikhonov-Fenichel reduction for parameterized critical manifolds with applications to chemical reaction networks}

\author{Elisenda Feliu\\Department of Mathematical Sciences, University of Copenhagen\\Universitetsparken 5, 2100
Copenhagen, Denmark\\ \tt{efeliu@math.ku.dk}\\
\\
Niclas Kruff\\
Lehrstuhl A f\"ur Mathematik, RWTH Aachen\\
52056 Aachen, Germany\\
\tt{niclas.kruff@matha.rwth-aachen.de}\\
\\
Sebastian Walcher\\
Lehrstuhl A f\"ur Mathematik, RWTH Aachen\\
52056 Aachen, Germany\\
\tt{walcher@matha.rwth-aachen.de}
}

%\date{}

\maketitle
\begin{abstract} We derive a reduction formula for singularly perturbed ordinary differential equations (in the sense of Tikhonov and Fenichel) with a known parameterization of the critical manifold. No a priori assumptions concerning separation of slow and fast variables are made, or necessary.We apply the theoretical results to chemical reaction networks with mass action kinetics admitting slow and fast reactions. For some relevant classes of such systems there exist canonical parameterizations of the variety of stationary points, hence the theory is applicable in a natural manner. In particular we obtain a closed form expression for the reduced system when the fast subsystem admits complex balanced steady states.

\medskip
\noindent
{\bf MSC (2010):} 92C45, 34E15, 80A30, 13P10  \\
{\bf Key words}: Singular perturbation, critical manifold, chemical reaction network, complex balancing\\
\end{abstract}
\section{Introduction}
A fundamental result on singular perturbation reductions, due to Tikhonov and Fenichel, allows to reduce the dimension of an ordinary differential equation with a small positive parameter in the asymptotic limit when the parameter approaches zero. This  theorem has numerous applications in the sciences, {in particular  to chemical and biochemical reaction networks, especially quasi-steady state (QSS) for certain chemical species,} and partial equilibrium approximation (PEA) for slow and fast reactions. However, the application of Tikhonov's and Fenichel's theory to reaction networks may pose some computational problems, and the purpose of the present paper is to address and resolve one of these problems.

Throughout the present work we assume that a suitable small parameter (in a system possibly depending on several parameters) has been identified, hence we deal with a singularly perturbed ordinary differential equation. However, we do not assume the equation to be given in separated fast and slow variables, so the usual version of the reduction theorem is not directly applicable.  In applications, fast-slow variable separation is frequently not satisfied a priori and worse, there may be no explicit way to rewrite the system in fast-slow form. Generally one may circumvent (and to some extent resolve) this problem by resorting to an ``implicit'' version of the reduction, which admits the critical submanifold of phase space as an invariant set, but this approach may also encounter computational feasibility problems. Given this background, we derive in the present paper an explicit singular perturbation reduction that is applicable whenever a parameterization of the critical manifold is known. 

The paper is organized as follows. After some preliminary work (mostly recalling notions and results from the literature) we derive in Section 2 a general formula for Tikhonov-Fenichel reduction when a (possibly local) parameterization of the critical manifold is given, and also consider some special cases. We illustrate the procedure by some small examples, briefly indicating that the range of applications is not restricted to chemical reaction networks. 
However, this reduction formalism seems particularly useful for reaction networks when the partial equilibrium approximation is applicable, since in many instances varieties of stationary points admit a canonical parameterization. This setting is discussed in detail in Section 3, and our results include a closed form reduction formula for fast subsystems that are complex balanced, as well as a discussion of linear attractivity properties of slow manifolds. To finish the paper we discuss some examples.

\section{A reduction formula}\label{sec:reduction}
We consider the singular perturbation reduction of ordinary differential equations, with no a priori assumption on separated (slow and fast) variables. 
Thus let \(U\subseteq \R^{n}\) be open, $\varepsilon_0>0$, and let \(h\) be a smooth function in some neighborhood of $U \times [0,\varepsilon_0)$, with values in $\R^n$. This defines a parameter-dependent system of ordinary differential equations, viz.
\begin{equation}\label{sys}
 \dot x = h^{(0)}(x) + \varepsilon h^{(1)}(x) +\varepsilon^2 \dotso, \quad x\in U, \quad \varepsilon\geq 0,
\end{equation}
and rewritten in slow time scale \(\tau =\varepsilon t\) we have a singularly perturbed system
\begin{equation}\label{sys-slow}
x' = \frac{1}{\varepsilon}h^{(0)}(x) + h^{(1)}(x) +\varepsilon \dotso, \quad x\in U, \quad \varepsilon\geq 0.
\end{equation}
Here \(h^{(0)}\) is called the fast part and \(h^{(1)}\) the slow part of either system. We focus on the behavior of the \eqref{sys}, \eqref{sys-slow} as \(\varepsilon \to 0\), and we will restrict attention to scenarios for which, modulo a coordinate transformation, the classical singular perturbation theorems of Tikhonov \cite{tikh} and Fenichel \cite{fenichel} are applicable. (To be specific, we refer to the version of Tikhonov's theorem as given in Verhulst \cite{verhulst}, Theorem 8.1 ff.; see also \cite{gwmc}, Section 2.1.)  While one can establish intrinsic conditions for the existence of a coordinate transformation to ``Tikhonov standard form'' with separated slow and fast variables, such a transformation cannot generally be obtained in explicit form. We first recall a general implicit reduction procedure developed in \cite{gwmc,nw11}, and then present, as a new result, a version of the reduced system that can be computed explicitly when a parameterization of the critical manifold is known.

\subsection{Review: Tikhonov--Fenichel reduction}
We recall the essential results on coordinate-independent Tikhonov-Fenichel reduction from \cite{gwmc,nw11}; in particular we refer to \cite{gwmc}, Theorem 1 and the subsequent remarks.

\begin{proposition}\label{baseprop} Let system \eqref{sys} be given, and denote by ${\mathcal V}(h^{(0)})$ the zero set of $h^{(0)}$.  Moreover let $0<r<n$ and set $s:=n-r >0$.
\begin{enumerate}[(a)]
\item Assume that $a\in {\mathcal V}(h^{(0)})$ has the following properties.
\begin{itemize}
\item There exists a neighborhood $\widetilde U$ of $a$ such that ${\rm rank}\,Dh^{(0)}(x)=r$ for all $x\in Z:={\mathcal V}(h^{(0)})\cap\widetilde U$; in particular $Z$ is an $s$-dimensional submanifold of $\mathbb R^n$.
\item For all $x\in Z$ there is a direct sum decomposition
\[
 \R^n = {\rm Ker\ } Dh^{(0)}(x) \oplus  {\rm Im\ } Dh^{(0)}(x).
\]
\item For all $x\in Z$ the nonzero eigenvalues of  $Dh^{(0)}(x)$ have real part $<0$.
\end{itemize}
Then in some neighborhood of $a$ there exists an invertible coordinate transformation from \eqref{sys} to Tikhonov standard form
\[
\begin{array}{rcl}
\dot y_1&=& \varepsilon f_1(y_1,y_2) +O(\varepsilon^2)\\
\dot y_2&=& \phantom{\varepsilon} f_2(y_1,y_2) +O(\varepsilon)\\
\end{array}
\]
with separated slow and fast variables; moreover the fast system satisfies a linear stability condition. 
\item Conversely, the conditions in part (a) are necessary for the existence of a local coordinate transformation to Tikhonov standard form.
\item One may choose $\widetilde U$ such that there exists a product decomposition with functions $\mu(x) $ taking values in $\R^{r\times 1}$, $ P(x)$ taking values in $ \R^{n\times r}$, such that 
\begin{equation}\label{zerlegung}
 h^{(0)}(x) = P(x) \mu(x), \quad \text{for all  }x \in Z; 
\end{equation}
moreover \({\rm rank}\ P(a) = r\), ${\rm rank}\ D\mu(a)=r$ and
\[
Z = \mathcal{V}(\mu) \cap \widetilde U.
\]
Here the entries of $\mu$ may be taken as any $r$ entries of $h^{(0)}$ that are functionally independent at $a$.
\item The following system (in slow time) is defined on $\widetilde U$, and admits $Z$ as an invariant set:
\begin{equation}\label{eq:reduktionsformel}
 x ^\prime = \left (I_n - P(x) A(x)^{-1} D\mu(x)\right) h^{(1)}(x),
 \end{equation}
with 
\[
 A(x):= D\mu(x) P(x).
\]
The restriction of this system to $Z$ corresponds to the reduced equation in Tikhonov's theorem.
\end{enumerate}
\end{proposition}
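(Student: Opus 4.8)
\medskip
\noindent\emph{Proof plan.} I would treat the three assertions of part (d) in turn --- that \eqref{eq:reduktionsformel} is well defined near $a$, that it leaves $Z$ invariant, and that its restriction to $Z$ reproduces the Tikhonov reduction from part (a). First I would upgrade the decomposition \eqref{zerlegung} from $Z$ to a full neighborhood: since the entries of $\mu$ are entries of $h^{(0)}$ and $Z=\mathcal V(h^{(0)})\cap\widetilde U=\mathcal V(\mu)\cap\widetilde U$, every component of $h^{(0)}$ vanishes on the zero set of the submersion $\mu$, so using $\mu_1,\dots,\mu_r$ as part of a coordinate system near $a$ and applying Hadamard's lemma yields a smooth $P$ on a (shrunk) $\widetilde U$ with $h^{(0)}=P\mu$ there, and in particular $Dh^{(0)}(x)=P(x)D\mu(x)$ for $x\in Z$. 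Comparing ranks, $\operatorname{rank}P(a)=\operatorname{rank}D\mu(a)=r=\operatorname{rank}Dh^{(0)}(a)$ forces $\operatorname{Im}P(a)=\operatorname{Im}Dh^{(0)}(a)$ and $\operatorname{Ker}D\mu(a)=\operatorname{Ker}Dh^{(0)}(a)$, so the direct sum hypothesis of part (a) becomes $\R^n=\operatorname{Im}P(a)\oplus\operatorname{Ker}D\mu(a)$; then $A(a)w=0$ puts $P(a)w$ in $\operatorname{Im}P(a)\cap\operatorname{Ker}D\mu(a)=\{0\}$, and injectivity of $P(a)$ gives $w=0$. Hence $A(a)$ is invertible, $\det A$ is nonzero on a neighborhood, and the right-hand side of \eqref{eq:reduktionsformel} is defined there.

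Next, invariance of $Z$: a one-line computation gives $D\mu(x)Q(x)=D\mu(x)-\bigl(D\mu(x)P(x)\bigr)A(x)^{-1}D\mu(x)=0$ with $Q(x):=I_n-P(x)A(x)^{-1}D\mu(x)$, so $\mu$ is constant along every solution of \eqref{eq:reduktionsformel} and a solution started on $Z=\mathcal V(\mu)\cap\widetilde U$ stays on $Z$. The same identity, together with $Q(x)P(x)=P(x)-P(x)A(x)^{-1}A(x)=0$ and $Q(x)v=v$ for $v\in\operatorname{Ker}D\mu(x)$, shows that on $Z$ the matrix $Q(x)$ is exactly the projection of $\R^n$ onto $\operatorname{Ker}Dh^{(0)}(x)=T_xZ$ along $\operatorname{Im}Dh^{(0)}(x)$; this also makes clear that the reduced vector field $Qh^{(1)}|_Z$ is independent of the choice of $\mu$ and $P$.

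Finally, I would match this with the classical reduced equation, which I expect to be the main obstacle. Invoking part (a), take a coordinate change $\Psi$ to Tikhonov standard form $\dot y_1=\varepsilon f_1+O(\varepsilon^2),\ \dot y_2=f_2+O(\varepsilon)$ that is $\varepsilon$-independent (slow coordinates: functionally independent local first integrals of $\dot x=h^{(0)}(x)$; fast coordinates: any complement). Then $\tilde h^{(j)}=\Psi_*h^{(j)}$, and since $h^{(0)}$ vanishes on $Z$ one has $D\tilde h^{(0)}=D\Psi\,Dh^{(0)}\,D\Psi^{-1}$ along $\Psi(Z)$, so by the projection characterization above $\Psi$ carries \eqref{eq:reduktionsformel} to its standard-form analogue. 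In standard coordinates I would use the admissible choice $\mu=f_2$, $P=\binom{0}{I_r}$, so that $A=D_{y_2}f_2$ and the first block of $Qh^{(1)}$ is $f_1(y_1,y_2)$; writing the critical manifold as the graph $y_2=g(y_1)$ and using $Dg=-(D_{y_2}f_2)^{-1}D_{y_1}f_2$ there, the first block restricts to $y_1'=f_1(y_1,g(y_1))$ --- precisely Tikhonov's reduced equation --- while the second block is the compatible $y_2'=Dg(y_1)y_1'$. The genuinely delicate points are the $\varepsilon$-independence of $\Psi$ and the covariance of the reduction formula under coordinate changes, so that the standard-form computation transfers back; the stability (eigenvalue) hypothesis of part (a) enters only through the applicability of Tikhonov's theorem, not through the algebraic identities above.
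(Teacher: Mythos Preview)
The paper does not supply its own proof of Proposition~\ref{baseprop}; it is stated there as a review result, with the justification delegated entirely to \cite{gwmc,nw11} (``in particular we refer to \cite{gwmc}, Theorem~1 and the subsequent remarks''). So there is nothing in the paper to compare your argument against step by step.

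That said, your plan for part~(d) is sound and is essentially the argument one finds in the cited sources. The extension of the factorization $h^{(0)}=P\mu$ from $Z$ to a neighborhood via Hadamard's lemma, the invertibility of $A(a)$ from the kernel--image splitting, the computation $D\mu\,Q=0$ for invariance of $Z$, and the identification of $Q$ as the projection onto $\operatorname{Ker}Dh^{(0)}$ along $\operatorname{Im}Dh^{(0)}$ are all correct and standard. Your final step---reducing to the standard-form case by checking covariance of the projection under diffeomorphisms, then computing explicitly with $\mu=f_2$, $P=\binom{0}{I_r}$---is the right way to connect \eqref{eq:reduktionsformel} with Tikhonov's classical reduced equation, and your acknowledgement that the $\varepsilon$-independence of $\Psi$ is the only point requiring care is accurate: it follows from the construction of the slow coordinates as first integrals of $h^{(0)}$ alone.
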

We refer to $Z$ as the local critical manifold (or local asymptotic slow manifold) of system \eqref{sys}.
\begin{remark}\label{rk:zero}
\begin{enumerate}[(a)]
\item Note that $Dh^{(0)}(x)=P(x)D\mu(x)$ on $Z$, due to $\mu(x)=0$. Since $P(x)$ has full rank on $Z$, $Dh^{(0)}(x)$ and $P(x)$ have the same column space.
\item The eigenvalues of $A(x)$, $x\in Z$, are the nonzero eigenvalues of $Dh^{(0)}(x)$ whenever the latter has rank $r$; see  \cite{gwmc}, Remark 3.
\item We call
\begin{equation}\label{eq:reduktionsoperator}
 Q(x):= I_n - P(x) A(x)^{-1} D\mu(x)
 \end{equation}
the {\em projection operator} of the reduction. For each $x$ this is a linear projection of rank $s=n-r$ which sends every element of $\mathbb R^n$ to its kernel component from the kernel-image decomposition with respect to $Dh^{(0)}(x)$. 
\item Formally system \eqref{eq:reduktionsformel} is defined whenever $A(x)$ is invertible, and by Fenichel's results it corresponds to a reduced system as $\varepsilon\to 0$ whenever all eigenvalues of $A(x)$ have nonzero real part (normal hyperbolicity).
\end{enumerate}
\end{remark}
\begin{remark}\label{rk:one} The reduced system may just have the form $x'=0$; in particular this occurs in the following scenario: 
$h^{(0)}$ always admits $n-s$ independent first integrals near any point of $Z$, and locally every point of $Z$ is uniquely determined as an intersection of $Z$ with suitable level sets of these first integrals; see \cite{gwmc}, Subsection 2.3. Now, in the special case when $h^{(1)}$ admits the same first integrals, then $Z$ as well as every intersection of $Z$ with level sets is invariant for the reduced equation, meaning that every point of $Z$ is invariant, thus stationary.  However, the only information to be gained from  $x'=0$ for small $\varepsilon>0$ is that system \eqref{sys-slow} restricted to the invariant manifold has right hand side of order $\varepsilon$ or higher. (Generally the reduced system \eqref{eq:reduktionsformel} in slow time represents only the $O(1)$ term in $\varepsilon$.)
\end{remark}
While Proposition \ref{baseprop} provides a general coordinate-free approach to singular perturbation reduction, the critical manifold $Z$ is given only implicitly via the zeros of $h^{(0)}$, and one cannot generally expect an explicit reduction to a system in $\mathbb R^s$. Moreover there may be {a problem with the feasibility of the computations}, in particular with the computation of the projection matrix $Q$.  Therefore it is natural to search for simplified reduction procedures in special circumstances. One notable scenario appears when a parameterization for the critical manifold is explicitly known, and we will next discuss reduction in this case.

\subsection{Parameterized critical manifolds}\label{sec:parameterized}
We keep the assumptions and notation from Proposition \ref{baseprop}, in particular the decomposition \eqref{zerlegung},
the $s$-dimensional local critical manifold $Z$ (being the zero set of $\mu$, as well as of $h^{(0)}$), and the reduced system 
\begin{equation}\label{shortred}
x^\prime = Q(x)h^{(1)}(x)\quad\text{  on  }Z.
\end{equation}
Now assume that there is an open set $W\subseteq\mathbb R^s$ and a smooth parameterization
\begin{equation}\label{parameq}
\Phi \colon W\to Z, \quad{\rm rank}\,D\Phi(v)=s \text{  for all  }v\in W.
\end{equation}
 Then every solution $x(t)$ of \eqref{shortred} with initial value in $\Phi(W)$ can be written in the form
\[
x(t)=\Phi(v(t)),
\]
for $t$ in some neighborhood of $0$, and differentiation yields
\begin{equation}\label{paramprep}
D\Phi(v(t))\,v^\prime(t)=x^\prime(t)=Q(\Phi(v(t)))\cdot h^{(1)}(\Phi(v(t))).
\end{equation}

The remaining task is to simplify this expression.
\begin{theorem}\label{Parthm}
\begin{enumerate}[(a)]
\item For every $v\in W$ there exists a unique $R(v)\in\mathbb R^{s\times n}$ such that
\[
Q(\Phi(v))=D\Phi(v)\cdot R(v).
\]
\item The reduced system, in parameterized version \eqref{paramprep}, is given by 
\begin{equation}\label{paramend}
v^\prime=R(v)\cdot h^{(1)}(\Phi(v)).
\end{equation}
\item The matrix $R(v)$ is uniquely determined by the conditions
\[
R(v)\cdot P(\Phi(v))=0\quad\text{and  }R(v)\cdot D\Phi(v)=I_s,
\]
and therefore can be obtained from the matrix equation
\[
R(v)\cdot\left(D\Phi(v)\,|\,P(\Phi(v))\right)=\left(I_s\,|\,0\right)
\]
with $\left(D\Phi(v)\,|\,P(\Phi(v))\right)$ invertible. In particular, $v\mapsto R(v)$ is smooth.

\item For every $x\in Z$ let $L(x)\in \R^{s\times n}$ be of full rank $s$ and such that $L(x)Dh^{(0)}(x)=0$; equivalently $L(x)P(x)=0$. Moreover define $L^*(v):=L(\Phi(v))$. Then 
$$ R(v) =  \big(L^*(v)\, D\Phi(v)\big) ^{-1}   L^*(v),$$
and the reduced system, in parameterized form, is given by
\begin{equation}\label{paramendvar}
v' = \big(L^*(v)\, D\Phi(v)\big) ^{-1}   L^*(v)\, h^{(1)}(\Phi(v)).
\end{equation}
\end{enumerate}
\end{theorem}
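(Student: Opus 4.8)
The plan is to exploit the fact that $Q(x)$, for $x\in Z$, is the projection onto $\operatorname{Ker} Dh^{(0)}(x)$ along $\operatorname{Im} Dh^{(0)}(x)$, as recorded in Remark \ref{rk:zero}(c). The first observation, needed for (a), is that the column space (image) of $Q(\Phi(v))$ coincides with $\operatorname{Ker} Dh^{(0)}(\Phi(v))$, which by Remark \ref{rk:zero}(a) equals $\operatorname{Ker} D\mu(\Phi(v))$. On the other hand, differentiating the identity $\mu(\Phi(v))=0$ gives $D\mu(\Phi(v))\,D\Phi(v)=0$, so every column of $D\Phi(v)$ lies in that kernel; since $D\Phi(v)$ has full rank $s$ and the kernel has dimension $s$ (as $Dh^{(0)}(x)$ has rank $r=n-s$ on $Z$), the columns of $D\Phi(v)$ form a basis of $\operatorname{Ker} Dh^{(0)}(\Phi(v)) = \operatorname{Im} Q(\Phi(v))$. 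Hence each column of $Q(\Phi(v))$ is a unique linear combination of the columns of $D\Phi(v)$, which is precisely the assertion that there is a unique $R(v)\in\mathbb R^{s\times n}$ with $Q(\Phi(v))=D\Phi(v)\,R(v)$. Part (b) is then immediate: substitute $Q(\Phi(v))=D\Phi(v)R(v)$ into \eqref{paramprep}, obtain $D\Phi(v)\,v' = D\Phi(v)\,R(v)\,h^{(1)}(\Phi(v))$, and cancel the injective matrix $D\Phi(v)$ on the left.

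For (c), I would characterize $R(v)$ by two properties and check they pin it down. Right-multiplying $Q(\Phi(v))=D\Phi(v)R(v)$ by $D\Phi(v)$ and using that $Q$ restricts to the identity on its image (hence $Q(\Phi(v))D\Phi(v)=D\Phi(v)$, since the columns of $D\Phi(v)$ span $\operatorname{Im} Q(\Phi(v))$) gives $D\Phi(v)\,R(v)\,D\Phi(v)=D\Phi(v)$, and cancelling $D\Phi(v)$ on the left yields $R(v)D\Phi(v)=I_s$. Right-multiplying instead by $P(\Phi(v))$: since $\operatorname{Im}P(x)=\operatorname{Im}Dh^{(0)}(x)$ on $Z$ (Remark \ref{rk:zero}(a)) and $Q(\Phi(v))$ annihilates $\operatorname{Im}Dh^{(0)}(\Phi(v))$, we get $Q(\Phi(v))P(\Phi(v))=0$, hence $D\Phi(v)R(v)P(\Phi(v))=0$ and, cancelling $D\Phi(v)$, $R(v)P(\Phi(v))=0$. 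Conversely, $\big(D\Phi(v)\mid P(\Phi(v))\big)$ is an $n\times n$ matrix whose columns span $\operatorname{Ker}Dh^{(0)}\oplus\operatorname{Im}Dh^{(0)}=\mathbb R^n$ (using the direct sum hypothesis of Proposition \ref{baseprop}(a) together with $\operatorname{rank}P(a)=r$ extended to $Z$), so it is invertible; therefore the system $R(v)\big(D\Phi(v)\mid P(\Phi(v))\big)=(I_s\mid 0)$ has the unique solution $R(v)=(I_s\mid 0)\big(D\Phi(v)\mid P(\Phi(v))\big)^{-1}$, which depends smoothly on $v$ because matrix inversion is smooth and $\Phi$, $P$, $D\Phi$ are smooth. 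Finally for (d): given $L(x)$ of full rank $s$ with $L(x)P(x)=0$ (the equivalence $L(x)Dh^{(0)}(x)=0 \Leftrightarrow L(x)P(x)=0$ again coming from the equality of column spaces), I claim $L^*(v)D\Phi(v)$ is invertible — if $L^*(v)D\Phi(v)w=0$ then $D\Phi(v)w\in\operatorname{Ker}L^*(v)$; but $\operatorname{Ker}L^*(v)$ has dimension $r$ and contains $\operatorname{Im}P(\Phi(v))$, which also has dimension $r$, so $\operatorname{Ker}L^*(v)=\operatorname{Im}Dh^{(0)}(\Phi(v))$, forcing $D\Phi(v)w\in\operatorname{Ker}Dh^{(0)}(\Phi(v))\cap\operatorname{Im}Dh^{(0)}(\Phi(v))=\{0\}$, whence $w=0$. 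Then the candidate $\widetilde R(v):=\big(L^*(v)D\Phi(v)\big)^{-1}L^*(v)$ satisfies $\widetilde R(v)D\Phi(v)=I_s$ and $\widetilde R(v)P(\Phi(v))=\big(L^*(v)D\Phi(v)\big)^{-1}L^*(v)P(\Phi(v))=0$, so by the uniqueness in (c) we conclude $\widetilde R(v)=R(v)$, and \eqref{paramendvar} follows from \eqref{paramend}.

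The only genuinely substantive point is the identification in (a) of $\operatorname{Im}Q(\Phi(v))$ with the span of the columns of $D\Phi(v)$; once the relevant subspaces are correctly matched up — kernel of $Dh^{(0)}$ equals kernel of $D\mu$ equals column span of $D\Phi$, and image of $Dh^{(0)}$ equals image of $P$ equals kernel of $L$ — everything else is bookkeeping with the direct sum decomposition and cancellation of full-rank matrices. I do not anticipate a real obstacle; the main care needed is to make sure the dimension counts are all invoked (rank $r$ of $Dh^{(0)}$ on $Z$, rank $s$ of $D\Phi$, rank $s$ of $L$, and the direct sum hypothesis), since each invertibility assertion relies on one of them.
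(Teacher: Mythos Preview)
Your proof is correct and follows the same overall strategy as the paper---identifying the column span of $D\Phi(v)$ with $\operatorname{Ker}Dh^{(0)}(\Phi(v))=\operatorname{Im}Q(\Phi(v))$ and then exploiting the direct sum decomposition---but two steps are handled differently. For the second condition in (c), you use that a projection acts as the identity on its image, so $Q(\Phi(v))D\Phi(v)=D\Phi(v)$ and then cancel $D\Phi(v)$ on the left; the paper instead notes that $D\Phi(v)\,R(v)=Q(\Phi(v))$ is a rank-$s$ idempotent and invokes Lemma~\ref{linalglem} in the Appendix (if $A\in\mathbb R^{n\times s}$, $B\in\mathbb R^{s\times n}$ with $AB$ a rank-$s$ projection, then $BA=I_s$). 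Your route is more direct and avoids the auxiliary lemma. For (d), you first establish invertibility of $L^*(v)D\Phi(v)$ by a dimension count ($\operatorname{Ker}L^*(v)=\operatorname{Im}P(\Phi(v))$, hence meets $\operatorname{Im}D\Phi(v)$ trivially) and then verify that the candidate $(L^*D\Phi)^{-1}L^*$ satisfies the two characterizing conditions from (c). The paper goes the other way: since the rows of both $R(v)$ and $L^*(v)$ lie in the $s$-dimensional left-kernel of $P(\Phi(v))$, it writes $R(v)=\Lambda(v)L^*(v)$ for a unique $\Lambda(v)\in\mathbb R^{s\times s}$ and then solves $\Lambda(v)L^*(v)D\Phi(v)=I_s$ for $\Lambda(v)$, obtaining invertibility of $L^*(v)D\Phi(v)$ as a byproduct. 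Both arguments are sound; yours is slightly more self-contained, while the paper's makes the row-space relationship between $R$ and $L^*$ explicit.
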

\begin{proof} 
For every $v\in W$ one has $h^{(0)}(\Phi(v))=0$, and by differentiation
\[
Dh^{(0)}(\Phi(v))D\Phi(v)=0.
\]
Thus the image of $D\Phi(v)$ is contained in the kernel of $Dh^{(0)}(\Phi(v))$, and these two vector spaces have dimension $s$, hence they are equal. In turn, for $x\in Z$ the kernel of $Dh^{(0)}(x)$ is by construction equal to the image of $Q(x)$. Thus, for every $v$ the matrices $Q(\Phi(v))$ and $D\Phi(v)$ have the same column space, and the latter has full rank. Therefore every column of $Q(\Phi(v))$ is a unique linear combination of the columns of $D\Phi(v)$. Rewritten in matrix language, this is the assertion of part (a). Part (b) is now obvious from equation \eqref{paramprep} and injectivity of $D\Phi(v)$.

The first condition given in part (c) is a consequence of part (a), the identity $Q(x)\cdot P(x)=0$ for all $x\in Z$ (which is readily verified from the defining equation \eqref{eq:reduktionsoperator}), and the fact that $D\Phi(v)$ is an injective linear map. The second condition follows from the fact that $D\Phi(v)\cdot R(v)=Q(\Phi(v))$ is a projection of rank $s$, by using Lemma \ref{linalglem} in the Appendix. Invertibility of the matrix  $\left(D\Phi(v)\,|\,P(\Phi(v))\right)$ follows from the direct kernel--image decomposition with respect to $Dh^{(0)}(\Phi(v))$, since the columns of $D\Phi(v)$ span the kernel and the columns of $P(\Phi(v))$ span the image (see \cite{gwmc} for more details).

To prove (d), first
recall from Remark \ref{rk:zero} that $Dh^{(0)}(x)$ and $P(x)$ have the same column space, therefore $L(x)P(x)=0$ on $Z$. This and $R(v) P(\Phi(v))=0$ from part (c) imply that $R(v) = \Lambda(v) L(\Phi(v))$ for all $v\in W$, with $\Lambda(v)\in \R^{s\times s}$ uniquely determined. 
Using now the second condition in part (c), we have
$$  \Lambda(v) L(\Phi(v)) D\Phi(v)=I_s, $$
hence  $ L(\Phi(v)) D\Phi(v)$ is invertible  and
$$ \Lambda(v)=\big(L(\Phi(v)) D\Phi(v) \big)^{-1},   $$
 which leads to the asserted expression. 
\end{proof}
\begin{remark} \label{premark}
\begin{enumerate}[(a)]
\item To determine  the reduced equation \eqref{paramendvar}, there is no need for explicit knowledge of the projection matrix $Q$, or of the matrix $P$ from the decomposition. However, the column space of $Dh^{(0)}(x)$, $x\in Z$,  is a crucial ingredient.
\item On the other hand, knowledge of $P$ and $\mu$ seems indispensable for the computation of $A(x)=D\mu(x)P(x)$, and of $A(\Phi(v))$. Note that the eigenvalues of the latter provide direct information on the stability of the critical manifold; see Remark \ref{rk:zero} (b).
\end{enumerate}
\end{remark}
We consider some special cases in more detail.
\begin{corollary}\label{Parcor}
Assume that 
\[
\Phi(v)=\begin{pmatrix}\Phi_1(v)\\ \Phi_2(v)\end{pmatrix},\quad\text{ with   }\Phi_1(v)\in\mathbb R^s\text{  and } D\Phi_1(v) \text{ invertible, for all }\,v\in W,
\]
and partition 
\[
P(x)=\begin{pmatrix}P_1(x)\\P_2(x)\end{pmatrix}\quad\text{  with  }P_1(x)\in\mathbb R^{s\times r}.
\]
Then 
\[
R(v)=\left(R_1(v)\,|\, R_2(v)\right)
\]
with
\[
\begin{array}{rcl}
R_1(v)&=&  I_s-P_1\left( D\Phi_2 D\Phi_1^{-1}P_1-P_2\right)^{-1} D\Phi_2 D\Phi_1^{-1}\\
R_2(v)&=& P_1\left( D\Phi_2 D\Phi_1^{-1}P_1-P_2\right)^{-1}.
\end{array}
\]
In these expressions the argument of $ D\Phi_1$ and $ D\Phi_2$ is $v$ and the argument of $P_1$ and $P_2$ is $\Phi(v)$.

In the special case when $\Phi_1(v)=v$ we get
\[
\begin{array}{rcl}
R_1(v)&=&  I_s-P_1\left( D\Phi_2 P_1-P_2\right)^{-1} D\Phi_2 \\
R_2(v)&=& P_1\left( D\Phi_2 P_1-P_2\right)^{-1}.
\end{array}
\]
\end{corollary}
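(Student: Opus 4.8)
The plan is to derive the block formulas for $R(v)$ directly from the characterization in Theorem \ref{Parthm}(c), namely that $R(v)$ is the unique $s\times n$ matrix satisfying $R(v)\,D\Phi(v)=I_s$ and $R(v)\,P(\Phi(v))=0$. Writing $R(v)=(R_1(v)\mid R_2(v))$ with $R_1(v)\in\R^{s\times s}$ and $R_2(v)\in\R^{s\times s}$ (recall $n=r+s$, and in the block decomposition of $\Phi$ the second component $\Phi_2$ has $r$ coordinates since $\Phi_1$ has $s$), these two conditions become the linear system
\begin{equation*}
R_1\,D\Phi_1 + R_2\,D\Phi_2 = I_s,\qquad R_1\,P_1 + R_2\,P_2 = 0,
\end{equation*}
where I suppress the arguments as in the statement. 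First I would solve the first equation for $R_1$ using invertibility of $D\Phi_1$, obtaining $R_1=(I_s-R_2\,D\Phi_2)D\Phi_1^{-1}$. Substituting into the second equation gives $(I_s-R_2\,D\Phi_2)D\Phi_1^{-1}P_1 + R_2\,P_2=0$, i.e. $D\Phi_1^{-1}P_1 = R_2\big(D\Phi_2\,D\Phi_1^{-1}P_1 - P_2\big)$, so that
\begin{equation*}
R_2 = D\Phi_1^{-1}P_1\big(D\Phi_2\,D\Phi_1^{-1}P_1 - P_2\big)^{-1},
\end{equation*}
provided the $r\times r$ matrix in parentheses is invertible; back-substitution then yields the claimed expression for $R_1$. (Here I would note that the formula in the statement has $R_2=P_1(\cdots)^{-1}$ rather than $D\Phi_1^{-1}P_1(\cdots)^{-1}$; the two agree precisely because $\Phi_1$ is taken with $\Phi_1(v)=v$ in that part — so actually the general formula should read with the extra $D\Phi_1^{-1}$ factor, and the special case $\Phi_1(v)=v$ makes $D\Phi_1=I_s$ disappear. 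I would present the general computation cleanly and let the two displayed cases follow by specialization.)

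The one genuine thing to check is that the $r\times r$ matrix $M:=D\Phi_2\,D\Phi_1^{-1}P_1 - P_2$ is invertible, so that the formulas make sense; everything else is linear algebra bookkeeping. For this I would argue as follows: by Theorem \ref{Parthm}(c) the $n\times n$ matrix $\big(D\Phi(v)\mid P(\Phi(v))\big)$ is invertible. Performing block row operations — multiply the top block rows by $D\Phi_1^{-1}$ and subtract $D\Phi_2$ times the result from the bottom block rows — transforms
\begin{equation*}
\begin{pmatrix} D\Phi_1 & P_1\\ D\Phi_2 & P_2\end{pmatrix}
\ \longrightarrow\
\begin{pmatrix} I_s & D\Phi_1^{-1}P_1\\ 0 & P_2 - D\Phi_2\,D\Phi_1^{-1}P_1\end{pmatrix},
\end{equation*}
whose determinant is $\pm\det(P_2 - D\Phi_2\,D\Phi_1^{-1}P_1)=\pm\det(-M)$. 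Since these row operations are invertible, $M$ must be invertible, which is exactly what is needed.

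Finally I would verify that the matrix $R(v)$ so constructed indeed satisfies both defining conditions — this is immediate by construction, since it was obtained by solving precisely that linear system — and invoke the uniqueness in Theorem \ref{Parthm}(c) to conclude that it is \emph{the} matrix $R(v)$. The special case $\Phi_1(v)=v$ then follows by setting $D\Phi_1(v)=I_s$ throughout. I do not anticipate a real obstacle here: the only non-formal point is the invertibility of $M$, and that is handled by the block-elimination argument above; the rest is a routine manipulation of a $2\times 2$ block linear system. I would keep the exposition short, essentially displaying the two-equation system, the elimination, and the invertibility remark.
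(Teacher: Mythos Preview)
Your approach and the paper's are essentially the same: both rest on the characterization $R\,(D\Phi\mid P)=(I_s\mid 0)$ from Theorem~\ref{Parthm}(c). The only difference is that the paper simply \emph{verifies} that the displayed $R_1,R_2$ satisfy the two block equations $R_1D\Phi_1+R_2D\Phi_2=I_s$ and $R_1P_1+R_2P_2=0$, whereas you \emph{derive} them by solving that $2\times 2$ block system, and in addition you justify the invertibility of the Schur-type matrix $M=D\Phi_2 D\Phi_1^{-1}P_1-P_2$ by block elimination of the invertible matrix $(D\Phi\mid P)$. That extra step is a genuine addition: the paper's verification tacitly assumes $M$ is invertible.

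More importantly, your parenthetical observation is correct and worth stating plainly: the paper's general formula is missing a left factor of $D\Phi_1^{-1}$. With the $R_1,R_2$ exactly as printed one gets
\[
R_1D\Phi_1+R_2D\Phi_2=D\Phi_1-P_1M^{-1}D\Phi_2+P_1M^{-1}D\Phi_2=D\Phi_1,
\]
not $I_s$, so the paper's ``direct computation'' does not actually check out in the general case (the second identity $R_1P_1+R_2P_2=0$ \emph{does} hold). Your derived formulas
\[
R_2=D\Phi_1^{-1}P_1\,M^{-1},\qquad R_1=D\Phi_1^{-1}-D\Phi_1^{-1}P_1\,M^{-1}D\Phi_2\,D\Phi_1^{-1}
\]
are the right ones, and they specialize to the paper's (correct) special-case formulas when $D\Phi_1=I_s$. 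One small slip in your write-up: you wrote $R_2\in\R^{s\times s}$, but the block sizes force $R_2\in\R^{s\times r}$.
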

\begin{proof}
With $R_i$ given as above one verifies
\[
R_1D\Phi_1+R_2D\Phi_2=I_s\text{   and   } R_1P_1+R_2P_2=0
\]
by direct computation. Rewriting, one obtains
\[
\left(R_1\,|\,R_2\right)\begin{pmatrix} D\Phi_1 & P_1\\
                                                                D\Phi_2 & P_2\end{pmatrix}=\left( I_s\,|\,0\right),
\]
and this is the defining property of $R$ in Theorem \ref{Parthm}.
\end{proof}

\begin{remark}
\begin{enumerate}[(a)]
\item Up to a relabeling of variables in $\mathbb R^n$, a partitioning for $\Phi(v)$ as required in Corollary \ref{Parcor} always exists locally, due to the rank condition on the derivative.
\item The special case $\Phi_1(v)=v$ occurs when the critical manifold is the graph of some function. For this case, reduction formulas were derived earlier by Fenichel \cite{fenichel}, Lemma 5.4, and Stiefenhofer \cite{sti}, Equation (2.13).
\item In the yet more special case that $\Phi_1(v)=v$ and $\Phi_2(v)=0$ the procedure yields the familiar quasi-steady state reduction. Indeed, in this case one has $\mu(x)=x_2$ for $x=(x_1,\,x_2)^{\rm tr}$ and $x_1\in \mathbb R^s$, thus $R_1=I_s$, $R_2=-P_1P_2^{-1}$ and the reduced equation is
\[
v^\prime=\left(I_s\,|-P_1((v,\,0)^{\rm tr})P_2^{-1}((v,\,0)^{\rm tr})\right)\,\begin{pmatrix}h_1^{(1)}((v,\,0)^{\rm tr})\\ h^{(1)}_2((v,\,0)^{\rm tr})\end{pmatrix}.
\]
Ignoring higher order terms in $\varepsilon$ (which are irrelevant for Tikhonov-Fenichel reduction) and renaming variables, one obtains the same system by  setting the second part of
\[
\begin{pmatrix}\dot x_1\\ \dot x_2\end{pmatrix}=\begin{pmatrix} P_1\\ P_2\end{pmatrix}\cdot x_2+\varepsilon \begin{pmatrix}h_1^{(1)}\\ h^{(1)}_2\end{pmatrix}+\cdots
\]
equal to zero, solving for $x_2$, substituting into the first part and passing to slow time. This is another proof of the fact that singular perturbation reduction and QSS reduction agree when the critical manifold is a coordinate subspace. (The first proof was given in \cite{gwz2}, Proposition 5.)
\end{enumerate}
\end{remark}
Finally, with a view on chemical reaction networks, we address conservation laws.
\begin{proposition}\label{conslaw}
Let the smooth real-valued function $\psi$ be defined on some open subset of $U$ which has nonempty intersection with $\Phi(W)$, and assume that $\psi$ is a first integral of system \eqref{sys} for every $\varepsilon$. Then $\widetilde \psi:=\psi\circ \Phi$ is constant or a first integral of the parameterized reduced system \eqref{paramend}.
\end{proposition}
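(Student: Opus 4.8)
The plan is to differentiate $\widetilde\psi = \psi\circ\Phi$ along solutions of the parameterized reduced system \eqref{paramend} and show the result vanishes. The starting observation is that, since $\psi$ is a first integral of \eqref{sys} for \emph{every} $\varepsilon$, the identity $D\psi(x)\bigl(h^{(0)}(x) + \varepsilon h^{(1)}(x) + \cdots\bigr)\equiv 0$ holds identically (in $x$, for all small $\varepsilon\ge 0$) on the open set where $\psi$ is defined; comparing the Taylor coefficients in $\varepsilon$ yields $D\psi\cdot h^{(0)}\equiv 0$ and $D\psi\cdot h^{(1)}\equiv 0$ there. Then, by the chain rule and Theorem \ref{Parthm}(a), (b),
\[
D\widetilde\psi(v)\cdot v' = D\psi(\Phi(v))\,D\Phi(v)\,R(v)\,h^{(1)}(\Phi(v)) = D\psi(\Phi(v))\,Q(\Phi(v))\,h^{(1)}(\Phi(v)),
\]
so it suffices to prove $D\psi(x)\,Q(x)\,h^{(1)}(x) = 0$ for $x\in Z$ lying in the domain of $\psi$.

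Next I would insert the explicit form $Q(x) = I_n - P(x)A(x)^{-1}D\mu(x)$ from \eqref{eq:reduktionsoperator}. The contribution $D\psi(x)\,h^{(1)}(x)$ of the identity part vanishes by the order-$\varepsilon$ relation, so only the correction term $D\psi(x)\,P(x)A(x)^{-1}D\mu(x)\,h^{(1)}(x)$ remains, and it is enough to show $D\psi(x)\,P(x) = 0$ on $Z$. For this I would differentiate the identity $D\psi(x)\,h^{(0)}(x)\equiv 0$: along any curve through a point $x\in Z$ this gives $w^{\mathrm{tr}}D^2\psi(x)h^{(0)}(x) + D\psi(x)\,Dh^{(0)}(x)\,w = 0$ for every $w$, and the first summand vanishes because $h^{(0)}(x)=0$ on $Z$; hence $D\psi(x)\,Dh^{(0)}(x) = 0$ on $Z$. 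Since by Remark \ref{rk:zero}(a) the matrices $Dh^{(0)}(x)$ and $P(x)$ have the same column space on $Z$, we conclude $D\psi(x)\,P(x) = 0$ there, which kills the correction term.

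Combining these facts gives $D\widetilde\psi(v)\cdot v' = 0$ for all relevant $v$, i.e.\ $\widetilde\psi$ is constant along every solution of \eqref{paramend}; if $\widetilde\psi$ fails to be locally constant on $W$, this is exactly the statement that it is a first integral, which yields the asserted dichotomy. The only subtlety is the legitimacy of differentiating the first-integral relations, but this is harmless: being a first integral of \eqref{sys} for each $\varepsilon$ means $D\psi\cdot(h^{(0)} + \varepsilon h^{(1)} + \cdots)$ vanishes identically on an open set, so every manipulation stays within that open set and no boundary issues arise. I expect the main (though modest) obstacle to be the column-space identification $\im Dh^{(0)}(x) = \im P(x)$ on $Z$, since it is what converts the differentiated order-$\varepsilon^0$ condition into the orthogonality $D\psi\,P = 0$ needed to annihilate the projection correction.
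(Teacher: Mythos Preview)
Your proof is correct but proceeds differently from the paper's. The paper invokes an external result (\cite{lawa}, Proposition~8) to obtain directly that $D\psi(x)Q(x)h^{(1)}(x)=0$ on $Z$, and then applies the same chain-rule identity $D\widetilde\psi(v)\,v' = D\psi(\Phi(v))\,Q(\Phi(v))\,h^{(1)}(\Phi(v))$ you use. You instead derive that vanishing from scratch: expanding the first-integral condition in $\varepsilon$ gives $D\psi\cdot h^{(0)}\equiv 0$ and $D\psi\cdot h^{(1)}\equiv 0$ separately, and differentiating the former at points of $Z$ yields $D\psi\cdot Dh^{(0)}=0$, hence $D\psi\cdot P=0$ via Remark~\ref{rk:zero}(a); substituting into \eqref{eq:reduktionsoperator} then kills both pieces of $D\psi\,Q\,h^{(1)}$. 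Your route is more elementary and self-contained, and it makes transparent why the hypothesis ``first integral for \emph{every} $\varepsilon$'' is exactly what is needed (it supplies the two separate orthogonality relations). The paper's route is shorter but defers the substance to the cited lemma. A small streamlining of your argument: once $D\psi\,P=0$ on $Z$, the row vector $D\psi(x)$ annihilates the image component in the kernel--image splitting, so in fact $D\psi\,Q=D\psi$ on $Z$, whence $D\psi\,Q\,h^{(1)}=D\psi\,h^{(1)}=0$ without writing out $Q$ explicitly.
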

\begin{proof}
By \cite{lawa}, Proposition 8 the restriction of $\psi$  to the critical manifold $Z$ is also a first integral of the reduced system \eqref{eq:reduktionsformel}, thus $D\psi(x)Q(x)h^{(1)}(x)=0$ for all $x\in Z$. Therefore
\[
\begin{array}{rcl}
D\widetilde\psi(v)R(v)h^{(1)}(\Phi(v))&=&D\psi(\Phi(v))D\Phi(v)R(v)h^{(1)}(\Phi(v))\\
 &=&D\psi(\Phi(v))Q(\Phi(v))h^{(1)}(\Phi(v))\\
 &=&0,
\end{array}
\]
which is the characterizing property for first integrals of system \eqref{paramend}.
\end{proof}
%%%%%%%%%%%%%%%%%%%%%%%%%%%%%%%%%%%%%%%%%%%%%%%%%%%

\subsection{Illustrative examples}
The following small examples have the primary function to illustrate the arguments and reduction procedures from the previous subsection.
\begin{enumerate}[1.]
\item We consider a (hypothetical) slow-fast system, with fast reaction
\[
 X_1 +  X_2 \rightleftharpoons X_3
\]
and slow reaction
\[
X_1+X_3\rightleftharpoons 2 X_2,
\]
with associated differential equation  (according to the procedure from Subsection~\ref{subsec:reactions} below)
\[
\begin{array}{rcccccccl}
\dot x_1&=&-k_1x_1x_2&+& k_{-1}x_3&-&\varepsilon k_2x_1x_3&+&\varepsilon k_{-2}x_2^2\\
\dot x_2 &=&-k_1x_1x_2&+& k_{-1}x_3&+&2\varepsilon k_2x_1x_3&-&2\varepsilon k_{-2}x_2^2\\
\dot x_3&=&k_1x_1x_2&-& k_{-1}x_3&-&\varepsilon k_2x_1x_3&+&\varepsilon k_{-2}x_2^2.\\
\end{array}
\]
The critical manifold $Z$ is determined by $Kx_1x_2=x_3$, with $K=k_1/k_{-1}$, and we have $P=(1,1,-1)^{\rm tr}$, $\mu(x)=(-k_1x_1x_2+ k_{-1}x_3 )$. A parameterization of $ Z$ is given by
\[
\Phi\colon \mathbb R_{\geq 0}^2\to\mathbb R^3,\quad\begin{pmatrix}v_1\\v_2\end{pmatrix}\mapsto \begin{pmatrix}v_1\\v_2\\Kv_1v_2\end{pmatrix},
\]
hence to determine $R(v)$ via Theorem \ref{Parthm}(c) we have to solve
\[
R(v)\cdot\begin{pmatrix} 1& 0 & 1\\ 0& 1 & 1\\  Kv_2& K v_1& -1\end{pmatrix}=\begin{pmatrix} 1&0&0\\ 0&1&0\end{pmatrix}.
\]
By straightforward calculations one obtains
\[
R(v)=\frac1{1+K(v_1+v_2)}\begin{pmatrix}1+Kv_1&-Kv_1& 1\\
-Kv_2& 1+Kv_2 & 1\end{pmatrix}.
\]
With 
\[
h^{(1)}(\Phi(v))=(-k_2Kv_1^2v_2+ k_{-2}v_2^2)\cdot \begin{pmatrix}1\\-2\\ 1\end{pmatrix}
\]
the reduced system becomes
\[
\begin{pmatrix}v_1^\prime\\ v_2^\prime\end{pmatrix}=\frac{k_2Kv_1^2v_2-k_{-2}v_2^2}{1+K(v_1+v_2)}\begin{pmatrix}-2-3Kv_1\\ 1+3Kv_2\end{pmatrix}.
\]
This is a case where the critical manifold is the graph of the rational function $(x_1,x_2)\mapsto Kx_1x_2$; thus Corollary \ref{Parcor} would also be applicable. Moreover we get
\[
A(\Phi(v))= D\mu(\Phi(v)) P(\Phi(v)) =-\left(k_1(v_1+v_2)+k_{-1}\right)<0 \text{  on } \mathbb R^2_{\geq 0},
\]
hence linear stability of the critical manifold follows by Remark \ref{premark}(b).
\item As a non-hypothetical variant we discuss the system with the same fast reaction as in part 1, but with slow reaction
\[
X_3\rightleftharpoons X_1+X_4, 
\]
and associated differential equation 
\[
\begin{array}{rcccccccl}
\dot x_1&=&-k_1x_1x_2&+& k_{-1}x_3&+&\varepsilon k_2x_3&&\\
\dot x_2 &=&-k_1x_1x_2&+& k_{-1}x_3&&&&\\
\dot x_3&=&k_1x_1x_2&-& k_{-1}x_3&-&\varepsilon k_2x_3& &\\
\end{array}
\]
after discarding the equation for $x_4$. This is Michaelis-Menten with slow degradation of complex to enzyme and product. Here $R(v)$ is the same as in the previous example, and
\[
h^{(1)}(\Phi(v))=k_2Kv_1v_2\cdot \begin{pmatrix}1\\0\\ -1\end{pmatrix}.
\]
The reduced system becomes
\[
\begin{pmatrix}v_1^\prime\\ v_2^\prime\end{pmatrix}=\frac{k_2Kv_1v_2}{1+K(v_1+v_2)}\begin{pmatrix}Kv_1\\ -(1+Kv_2)\end{pmatrix},
\]
and we note a further built-in reduction: The differential equation for the reaction network admits the first integral $\psi=x_1+x_3$ from stoichiometry, hence by Proposition \ref{conslaw} the reduced equation inherits the first integral $\widetilde\psi=v_1+Kv_1v_2$. Thus one ends up with a one dimensional reduced equation, as it should be.
\item For contrast, consider the hypothetical slow-fast system with fast reaction
\[
2 X_1 + 2 X_2 \rightleftharpoons 3 X_3
\]
and the same slow reaction as in part 1.
The differential equation now becomes
\[
\begin{array}{rcccccccl}
\dot x_1&=&-2k_1x_1^2x_2^2&+&2 k_{-1}x_3^3&-&\varepsilon k_2x_1x_3&+&\varepsilon k_{-2}x_2^2\\
\dot x_2 &=&-2k_1x_1^2x_2^2&+&2 k_{-1}x_3^3&+&2\varepsilon k_2x_1x_3&-&2\varepsilon k_{-2}x_2^2\\
\dot x_3&=&3k_1x_1^2x_2^2&-&3 k_{-1}x_3^3&-&\varepsilon k_2x_1x_3&+&\varepsilon k_{-2}x_2^2,\\
\end{array}
\]
and the critical manifold $ Z$ is given by $Kx_1^2x_2^2=x_3^3$, with $K=k_1/k_{-1}$, and $P=(2,2,-3)^{\rm tr}$. A parameterization of $ Z$ is given by
\[
\Phi\colon\,\,\begin{pmatrix}v_1\\v_2\end{pmatrix}\mapsto \begin{pmatrix}v_1^3\\v_2^3\\Kv_1^2v_2^2\end{pmatrix}.
\]
It is obvious that
\[
 L=\begin{pmatrix}1&-1&0\\3&0&2\end{pmatrix}
\]
is of rank two and satisfies $L\cdot P=0$. This yields, by Theorem \ref{Parthm}(d), 
\[
 L\cdot D\Phi(v)=L\cdot \begin{pmatrix}3 v_1^2 & 0\\0& 3v_2^2\\2Kv_1v_2^2&2Kv_1^2v_2\end{pmatrix}=\begin{pmatrix}3v_1^2 & -3v_2^2\\ 9v_1^2+4Kv_1v_2^2 & 4K v_1^2v_2\end{pmatrix}.
\]
Finally the reduced system is given by
\begin{align*}
 v'&=\frac{1}{3v_1v_2(4K(v_1^3+v_2^3)+9v_1v_2)}\begin{pmatrix}4Kv_1^2v_2&3v_2^2\\-9v_1^2-4Kv_1v_2^2&3v_1^2\end{pmatrix}\cdot L\cdot h^{(1)}\\
 &=\frac{1}{3v_1v_2(4K(v_1^3+v_2^3)+9v_1v_2)}\cdot \\
 &\begin{pmatrix}-12K^2v_1^7v_2^3k_2+12Kv_1^2v_2^7k_{-2}-15Kv_1^2v_1^3v_2^4k_2+15v_2^8k_{-2}\\12K^2v_1^6v_2^4k_2+27Kv_1^7v_2^2k_2-12Kv_1v_2^8k_{-2}-15Kv_1^2v_1^5v_2^2k_2-12v_1^2v_2^6k_{-2}\end{pmatrix}.
\end{align*}
One can further reduce the dimension to one by utilizing the first integral $\psi=4x_1+5x_2+6x_3$ from stoichiometry.

In this example we could have chosen a different parameterization 
\[
\Psi\colon \begin{pmatrix}v_1\\v_2\end{pmatrix}\mapsto \begin{pmatrix}v_1\\v_2\\Kv_1^{2/3}v_2^{2/3}\end{pmatrix},
\]
for $ Z$, which directly represents $ Z$ as  the graph of a function, but it may be more convenient to work with a reduced system that has rational right hand side.
\item Finally we sketch an example that is motivated by mechanics, to illustrate that the range of applications does not only include reaction networks.  (See Arnold and Anosov \cite{aran} for background, and also \cite{walchernofo}.) Specifically we look at a pair of coupled nonlinear oscillators
\[
\begin{array}{rcccl}
\dot y_1&=& y_2 &+&\cdots \\
\dot y_2&=&-y_1&+&\cdots\\
\dot y_3&=& \omega y_4&+&\cdots\\
\dot y_4&=&-\omega y_3&+&\cdots
\end{array}
\]
with irrational $\omega>0$,  thus we are in a non-resonant scenario.
 Computing a normal form up to degree three and reduction by invariants $y_1^2+y_2^2$, $y_3^2+y_4^2$ yields a two-dimensional system, which generically allows to decide about stability. But here we look at a degenerate case, with reduced equation
\begin{equation}\label{mechex}
\begin{array}{rcl}
\dot x_1&=&x_1\left(ax_1+bx_2\right)\\
\dot x_2&=&cx_2\left(ax_1+bx_2\right)\\
\end{array}
\end{equation}
and parameters $a<0$, $b>0$ and $c<0$. The choice of signs ensures that the line of stationary points given by $\mu:=ax_1+bx_2=0$ lies in the positive quadrant (which is positively invariant), and also that solutions on the invariant lines $x_1=0$ resp. $x_2=0$ converge to $0$ as $t\to\infty$.

We consider \eqref{mechex} as fast part $h^{(0)}$ of a singularly perturbed system, thus we have
\[
P(x)=\begin{pmatrix}x_1\\ cx_2\end{pmatrix}\text{  and choose  }\Phi(v)=\begin{pmatrix}bv\\ -av\end{pmatrix}, \,v>0.
\]
A straightforward calculation shows that $A(\Phi(v))=ab(1-c)v<0$ whenever $v>0$, so the line $\mu=0$ is attracting in the positive quadrant. Moreover one may choose $L(x)=(cx_2,\,-x_1)$ and thus obtains the reduced equation
\begin{equation}\label{mechexred}
v^\prime =\frac{-1}{ab(1-c)}\cdot\begin{pmatrix}ac & b\end{pmatrix}h^{(1)}(\Phi(v))
\end{equation}
for arbitrary small perturbation $h^{(1)}$. The choice
\[
h^{(1)}(x)=\begin{pmatrix} x_1^3\\ -x_2^4\end{pmatrix}, \quad h^{(1)}(\Phi(v))=\begin{pmatrix} b^3v^3\\ -a^4v^4\end{pmatrix}
\]
is compatible with the mechanical context under consideration here, and yields a positive stationary point for \eqref{mechexred} as well as \eqref{mechex}. For the original system this yields the existence of an invariant torus.
\end{enumerate}

%%%%%%%%%%%%%%%%%%%%%%%%%%%%%%%%%%%%%%%%%%%%%%%%%%

\section{Applications to reaction networks}
While the range of applications of Theorem \ref{Parthm} is not restricted to chemical reaction networks, it is natural to discuss these in greater detail: The consideration of slow and fast reactions leads to critical manifolds that consist of stationary points of a subnetwork, and for some relevant and familiar classes of networks explicit parameterizations of the variety of stationary points exist.  We first recall some general facts about reaction networks and then discuss two special classes. The results will be illustrated by examples.

\subsection{Reaction networks} \label{subsec:reactions}
We briefly recall here the mathematical description of reaction networks according to Feinberg \cite{feinberg}, Horn and Jackson \cite{hoja} (see also the recent monograph \cite{feinbergbook} by Feinberg), and then outline the general setup for networks with fast and slow reactions, as already suggested by some illustrative examples in the previous section.

A \emph{reaction network} on a set of species $\{X_1,\dots,X_n\}$  is a digraph whose nodes are finite linear combinations  of species with nonnegative integer coefficients; each edge is called a reaction. Thus a node is of the form $y=\sum_{i=1}^n a_i X_i$ and is identified with the vector $y=(a_1,\dots,a_n)\in \R^n$.  
We let $x_i$ denote the concentration of $X_i$ and $x=(x_1,\dots,x_n)$. For each reaction (denoted by $y\rightarrow y'$) we assume given a rate function $w_{y\rightarrow y'}(x)\in \R_{\geq 0}$ for $x\in \R^n_{\geq 0}$. This leads to a system of differential equations describing the evolution of the concentrations  in time:
\begin{equation}\label{eq:ode}
\dot{x} = \sum_{\textrm{reactions } y\rightarrow y' } w_{y\rightarrow y'}(x) (y'-y),\qquad x\in \R^n_{\geq 0}.
\end{equation}
Note that $y'-y\in \R^n$ encodes the net production of each species by the occurrence of the reaction $y\rightarrow y'$. The vector subspace spanned by all the $y'-y$ is called the \emph{stoichiometric subspace} of the reaction network.

It is convenient to write the system in matrix-vector form by introducing the \emph{  matrix} $N$ whose columns are the vectors $y'-y$ (after fixing an order of the set of reactions). Then, with $w(x)$ denoting the vector of rate functions in the same order,  \eqref{eq:ode} can be rewritten as
\begin{equation}\label{eq:ode2}
\dot{x} =N w(x),\qquad x\in \R^n_{\geq 0}.
\end{equation}
A frequent choice of rate function is the one from \emph{mass action kinetics}, with
$$ w_{y\rightarrow y'}(x) = k_{y\rightarrow y'} \prod_{i=1}^n x_i^{y_i},$$
with $k_{y\rightarrow y'} >0$ called reaction rate constants and using the convention $0^0=1$. 
For the following we recall some definitions:
\begin{definition}
 Let $x,y\in \R^n$ and $M\in \R^{n\times m}$, with columns $M_1,\ldots,M_m\in \mathbb R^n$.
 \begin{enumerate}[(a)]
\item For $x\in \mathbb R_{>0}^n$ we define 
\[
x^y:=\prod_{i=1}^n x_i^{y_i},
\]
noting that the definition may be extended to all $x\in\mathbb R^n$ when all $y_i$ are nonnegative integers.
  \item For $x\in \mathbb R_{>0}^n$ we define
  \[
   x^{M}:=\begin{pmatrix} x^{M_{_1}}\\ \vdots\\ x^{M_{_m}}\end{pmatrix}\in \mathbb R^m,
  \]
noting that the definition may be extended to all $x\in\mathbb R^n$ when all entries of $M$ are nonnegative integers.
  \item The \textit{Hadamard product} $x\circ y$ is defined as the componentwise multiplication of the two vectors $x,y$, i.e.\
  \[
   x\circ y=\begin{pmatrix}x_1\cdot y_1\\ \vdots\\ x_n\cdot y_n\end{pmatrix}.
  \]
 \end{enumerate}
\end{definition}

In view of the last definition we can rewrite the reaction network for mass action kinetics  in the form
\begin{equation}\label{eq:ode3}
\dot x=N\cdot w(x)=N\cdot (K\circ x^{Y}),
\end{equation}
where $K\in \R_{> 0}^{m}$ ($m$ is the number of reactions) is a vector containing the reaction rate constants and $Y\in \R^{n\times m}$ is  the matrix whose columns are the reactant vectors of each reaction, called the kinetic order matrix.

It follows directly from \eqref{eq:ode2} that any vector in the left-kernel of $N$ defines a linear first integral, regardless of the form of $w(x)$. 
These linear first integrals are commonly referred to as conservation laws, and their common level sets are called stoichiometric compatibility classes. 
If each connected component of the reaction network has exactly one terminal strongly connected component, then all linear first integrals of \eqref{eq:ode2} arise in this way; see Feinberg and Horn \cite{feinberg-invariant}.  Finally we recall the notion of {\em deficiency} of the reaction network, which is defined as the number of nodes minus the rank of $N$ minus the number of connected components; {see e.g. Horn \cite{horn} or Feinberg \cite{feinberg:deficiency}.}

\medskip
We turn now to a scenario with prescribed slow and fast reactions; see also the discussions in Heinrich and Schauer \cite{hesch}, Lee and Othmer \cite{lo}. The subdigraph induced by the fast reactions is itself a reaction network with the same set of species, which we call the \emph{fast subnetwork}.   We stipulate that even if some species are not part of any fast reaction, we still consider them as part of the fast subnetwork. 
We have a corresponding stoichiometric matrix $N_{\mathsf f}$ and rate vector $w_{\mathsf f}(x)$, such that, in the notation of Section~\ref{sec:reduction},
\begin{equation}\label{eq:h0:fast}
 h^{(0)}(x)= N_{\mathsf f}w_{\mathsf f}(x)=N_{\mathsf f}\cdot (K_{\mathsf f}\circ x^{Y_{\mathsf f}}).
\end{equation}
Analogously, we have 
\begin{equation}\label{eq:h1:slow}
 h^{(1)}(x)= N_{\mathsf s}w_{\mathsf s}(x)=N_{\mathsf s}\cdot (K_{\mathsf s}\circ x^{Y_{\mathsf s}})
\end{equation}
for the slow subsystem.
Keeping the  notation from Section \ref{sec:reduction}, we let $Z$ be the zero set of $ h^{(0)}$ (possibly restricted to a neighborhood $\widetilde U$ of some point), and let $r$ denote the rank of $D h^{(0)}(x)$, $x\in Z$.
Then clearly $\rank N_{\mathsf f}\geq r$, but the inequality may be strict; see Heinrich and Schauer \cite{hesch} and also Section 3 of \cite{gwmc}. In the present paper we will, however, restrict attention to the case when equality holds:\\

\noindent{\bf Blanket hypothesis.} We impose on system \eqref{eq:h0:fast} the conditions of Proposition \ref{baseprop}(a)  and the additional condition that $\rank N_{\mathsf f}={\rm rank}\,Dh^{(0)}(x)=r$, $x\in Z$.\\

Due to nonnegativity of concentrations, the points of $Z$ will be in $\mathbb R^n_{\geq 0}$. In some instances we will require that the neighborhood $\widetilde{U}$ in Proposition~\ref{baseprop} is even a subset of  $\R^n_{>0}$, and likewise we will occasionally require that the domain $W$ of the parameterization  $\Phi$ is a subset of $\R^s_{>0}$.
By our assumption the zero set $Z$ of $h^{(0)}$ has dimension $s=n-r$. 
Assume now that there exists a smooth parameterization 
$$ \Phi\colon W \rightarrow Z$$
with $\rank D\Phi(v)=s$ on $W$. 

\begin{proposition}\label{prop:R} Let system  \eqref{eq:h0:fast} be given, with a parameterization $\Phi$ of the critical manifold as in \eqref{parameq}, and assume the blanket hypothesis holds on $\Phi(W)$.
Let $L_{\mathsf f}\in \R^{s\times n}$ be a matrix whose rows form a basis of the left-kernel of $N_{\mathsf f}$. 
Then the matrix $R(v)$ in  Theorem~\ref{Parthm} is given as
$$ R(v) =  \big(L_{\mathsf f}\, D\Phi(v)\big) ^{-1}   L_{\mathsf f},$$
and the reduced system is
$$v' = \big(L_{\mathsf f}\, D\Phi(v)\big) ^{-1}   L_{\mathsf f} \, h^{(1)}(\Phi(v)), \qquad v\in W.$$
Furthermore, the column space of the matrix $P(x)$ in any decomposition of $h^{(0)}(x)$ as in Proposition~\ref{baseprop}(c) equals the column space  of $N_{\mathsf f}$. 
\end{proposition}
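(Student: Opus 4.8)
The plan is to derive Proposition~\ref{prop:R} from Theorem~\ref{Parthm}(d) together with the blanket hypothesis, the key point being to identify the ``generic'' matrix $L(x)$ appearing there with the constant matrix $L_{\mathsf f}$. First I would establish the last claim, namely that the column space of $P(x)$ equals the column space of $N_{\mathsf f}$ for every $x\in Z$. By Remark~\ref{rk:zero}(a) the column space of $P(x)$ coincides with the column space of $Dh^{(0)}(x)$ on $Z$, so it suffices to show $\im Dh^{(0)}(x)=\im N_{\mathsf f}$. Since $h^{(0)}(x)=N_{\mathsf f}w_{\mathsf f}(x)$, the chain rule gives $Dh^{(0)}(x)=N_{\mathsf f}\,Dw_{\mathsf f}(x)$, so $\im Dh^{(0)}(x)\subseteq \im N_{\mathsf f}$; this is always true. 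The reverse inclusion is exactly where the blanket hypothesis enters: we have $\rank Dh^{(0)}(x)=r=\rank N_{\mathsf f}$, and a subspace contained in another of the same (finite) dimension must be equal. Hence $\im Dh^{(0)}(x)=\im N_{\mathsf f}=\im P(x)$.

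Next I would deduce the formula for $R(v)$. The matrix $L_{\mathsf f}$ has full rank $s=n-r$ by construction (its rows form a basis of the left-kernel of $N_{\mathsf f}$, which has dimension $n-\rank N_{\mathsf f}=n-r=s$), and $L_{\mathsf f}N_{\mathsf f}=0$. Because $\im P(x)=\im N_{\mathsf f}$, left-multiplying $P(x)$ by $L_{\mathsf f}$ kills every column, i.e. $L_{\mathsf f}P(x)=0$ for all $x\in Z$; equivalently $L_{\mathsf f}\,Dh^{(0)}(x)=0$ on $Z$. Thus the constant matrix $L(x):=L_{\mathsf f}$ satisfies precisely the hypotheses imposed on $L$ in Theorem~\ref{Parthm}(d): it is of full rank $s$ and annihilates $Dh^{(0)}(x)$ (equivalently $P(x)$) on all of $Z$. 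Setting $L^*(v)=L(\Phi(v))=L_{\mathsf f}$, Theorem~\ref{Parthm}(d) immediately yields
\[
R(v)=\big(L_{\mathsf f}\,D\Phi(v)\big)^{-1}L_{\mathsf f},
\]
with $L_{\mathsf f}\,D\Phi(v)$ invertible, and substituting into \eqref{paramend} gives the stated reduced system $v'=\big(L_{\mathsf f}\,D\Phi(v)\big)^{-1}L_{\mathsf f}\,h^{(1)}(\Phi(v))$.

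The main obstacle, such as it is, is conceptual rather than computational: one has to notice that the only role the point-dependent matrix $L(x)$ plays in Theorem~\ref{Parthm}(d) is to span a complement to $\im P(x)$ via its left-kernel relation, and that under the blanket hypothesis this column space is the \emph{fixed} subspace $\im N_{\mathsf f}$, independent of $x$. Consequently a single constant matrix $L_{\mathsf f}$ works simultaneously for all $x\in Z$, which is what makes the reduced system so much more explicit than in the general case — no $x$-dependent linear algebra is needed to produce $L^*$. I would also remark in passing that this is genuinely a simplification afforded by the hypothesis $\rank N_{\mathsf f}=\rank Dh^{(0)}$: without it, $\im Dh^{(0)}(x)$ could be a proper, possibly varying, subspace of $\im N_{\mathsf f}$, and $L_{\mathsf f}$ would no longer annihilate $P(x)$. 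Everything else — full rank of $L_{\mathsf f}$, invertibility of $L_{\mathsf f}D\Phi(v)$, smoothness of $R$ — is inherited verbatim from Theorem~\ref{Parthm}.
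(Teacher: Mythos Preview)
Your proof is correct and follows the same route as the paper: apply Theorem~\ref{Parthm}(d) with the constant choice $L(x)=L_{\mathsf f}$, noting that $L_{\mathsf f}Dh^{(0)}(x)=L_{\mathsf f}N_{\mathsf f}Dw_{\mathsf f}(x)=0$. Your version is in fact more complete than the paper's one-line proof, since you explicitly establish the ``Furthermore'' claim about $\im P(x)=\im N_{\mathsf f}$ via the rank equality in the blanket hypothesis.
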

\begin{proof} 
This follows from Theorem~\ref{Parthm}(d), since $L_{\mathsf f}Dh^{(0)}(x)=L_{\mathsf f} N_{\mathsf f} Dw_{\mathsf f}(x) = 0$ on $Z$.
\end{proof}

Note that by Remark~\ref{rk:one}, if $\rank N = \rank N_{\mathsf f}$, then the reduced system is given by
$ v' =0.$

\subsection{Canonical parameterizations for some classes}
To find a function $\Phi(v)$ which yields a parameterization of positive steady states of the fast subnetwork, several strategies can be employed. We review here the two most common approaches.
Throughout we use the blanket hypothesis, denoting the rank of the stoichiometric matrix $N_{\mathsf f}$ by 
$r$, the number of species of the full network by $n$, and let $s=n-r$.
For simplicity, we consider mass action kinetics, although several results hold for more general classes of rate functions.

%%%%%%%%%%%%%%%%%%%%%%%%%%%%%%%%%%%%%%%%%%%%%%%%%%

\subsubsection{Non-interacting sets and rational parameterizations }\label{subnoninter}
As was shown in \cite{fwsiam}, non-interacting sets of species may be utilized to find rational parameterizations of the steady states, given certain conditions.
Thus consider a subset of species $\mathcal{Y}=\{X_{i_1},\dots,X_{i_r}\}$, with the following assumptions. 
\begin{enumerate}[(i)]
\item For every fast reaction $y\rightarrow y'$, both the sum of the coefficients of the species in $\mathcal{Y}$ in $y$ and the sum of the coefficients in $y'$ is at most one. This means that no pair of species in $\mathcal{Y}$ appear together at one side of a reaction, and further no species appears with coefficient greater than $1$. 
\item The rank of the submatrix of  $N_{\mathsf f}$ given by the rows $i_1,\dots,i_r$ is equal to $r$. 
\item Consider the network induced by the fast subnetwork by setting all species not in $\mathcal{Y}$ to zero. 
For each species $X_{i_j}$ in $\mathcal{Y}$, there is a directed path from $X_{i_j}$ to $0$ in this induced network.
\end{enumerate}
In the nomenclature of \cite{fwsiam}, assumption (i) means that $\mathcal{Y}$ is non-interacting, (ii) means that no conservation law has support in $\mathcal{Y}$, and (iii) means that there exists a spanning tree rooted at $*$ in the appropriate digraph (see \cite{fwsiam}, Section 8 for details). 

Let $X_{\ell_1},\dots,X_{\ell_s}$ be the  species not in $\mathcal{Y}$. 
If $\mathcal{Y}$ satisfies (i), (ii) and (iii), then
the components $i_1,\dots,i_r$ of $h^{(0)}(x)$ form a linear system in $x_{i_1},\dots,x_{i_r}$  that has a unique solution in terms of   $x_{\ell_1},\dots,x_{\ell_s}$.
Furthermore, the solution is a rational function in $x_{\ell_1},\dots,x_{\ell_s}$ and in the reaction rate constants $k_{y\rightarrow y'}>0$,  with all coefficients positive  \cite{fwsiam}. The solution can be found using graphical procedures, but in practice, solving the system of linear equations is the preferred approach (see  \cite{fwsiam} for more on this). 

By this procedure one obtains a parameterization of the  zero set $Z$ of $h^{(0)}$ in $s$ variables $v_i= x_{\ell_i}$, $i=1,\dots,s$.
Further, clearly $\rank D\Phi(v)=s$. In \cite{flww} some conditions are stated which guarantee that the assumptions in Proposition~\ref{baseprop}(a) are satisfied.
%%%%%%%%%%%%%%%%%%%%%%%%%%%%%%%%%%%%%%%%%%%%%%%%%%%%%

\subsubsection{Monomial parameterizations and deficiency zero networks }
We next consider another common scenario occurring, for instance, for so-called complex balanced steady states (see  Feinberg \cite{feinberg}, Horn and Jackson \cite{hoja}) and networks with toric steady states (see P\'erez Mill\'an et al. \cite{PDSC}, M\"uller et al. \cite{MFRCSD}). In this scenario the zero set $Z$ of  $h^{(0)}$ in $\R^n_{>0}$ agrees with the solution set of a collection of binomial equations
\begin{equation}\label{eq:binomial}
a_\ell(k) x^{u_\ell} - b_\ell(k) x^{{c}_\ell}=0,\qquad x\in \R^n_{>0}, \ \ell=1,\dots,{q},
\end{equation}
where $u_\ell,{c}_\ell\in \R^n$ and $a_\ell(k),b_\ell(k)$ are  polynomials in the parameters of the rate functions that only attain positive values for valid $k$. Here, for the sake of simplicity, we restrict attention to the case when all $x_i>0$.
Under these assumptions, the solution set $Z$ to \eqref{eq:binomial} equals the solution set of
\begin{equation}\label{eq:monomial}
 x^{u_\ell-{c}_\ell} = \frac{b_\ell(k)}{a_\ell(k)},\qquad x\in \R^n_{>0},  \ \ell=1,\dots,{q}.
\end{equation}
The solution set of \eqref{eq:monomial}, if non-empty, admits a monomial parameterization of the following form. Let $x^*$ be any fixed solution of \eqref{eq:monomial} and $M\in \R^{n\times {q}}$ the matrix whose columns are $u_\ell-{c}_\ell$.
If $x$ is a solution to \eqref{eq:monomial} then
$$ x^M =  (x^*)^M.   $$
It is a classical result (see for example Lemma 3.7 in M\"uller et al. \cite{MFRCSD}) that the solution set to this equation, and hence $Z$, can be parameterized in the form
\begin{equation}\label{binompar}
\Phi(v)=x^*\circ v^B = ( x^*_i v^{b_i})_{i=1,\dots,n}, \qquad v\in \R^d_{>0}.
\end{equation}
Here  $d=\dim \ker M^{\rm tr}$, and $b_1,\dots,b_n$ are the columns of a matrix $B\in \R^{d\times n}$ with row span equal to $\ker M^{\rm tr}$ and $\ker B^{\rm tr}=\{0\}$ (thus $d={\rm rank}\,B$). 
With an easy computation one verifies the well-known identity
\begin{equation}
D \Phi(v) = \diag(x^* \circ v^B) B^{\rm tr} \diag(1/v), 
\end{equation}
where, for a vector $\alpha$, $\diag(\alpha)$ denotes the diagonal matrix with the entries of $\alpha$ in the diagonal, and $1/v$ is  defined component-wise.
By this identity, the rank of $D \Phi(v)$ equals $d$, the rank of $B$, and therefore we are in the setting of Subsection~\ref{sec:parameterized} provided that $d=s$.
With this in place, by Proposition \ref{prop:R} the matrix $R(v)$ in Theorem~\ref{Parthm} becomes
\begin{equation}
\begin{array}{rcl}
R(v) &=&  \big(L_{\mathsf f}\,  \diag(x^* \circ v^B) B^{\rm tr} \diag(1/v) \big) ^{-1}   L_{\mathsf f} \\
&=& \diag(v)   \big(L_{\mathsf f}\,  \diag(x^* \circ v^B) B^{\rm tr}  \big) ^{-1}L_{\mathsf f}.
\end{array}
\end{equation}
We turn now to the special case of \emph{complex-balanced steady states} for mass action kinetics. These are steady states such that for each fixed node $y$ of the fast subnetwork, it holds that
\begin{equation}\label{eq:CB}
 \sum_{\textrm{reaction } y' \rightarrow y} w_{y'\rightarrow y}(x) (y-y')  = \sum_{\textrm{reaction } y \rightarrow y'} w_{y\rightarrow y'}(x) (y'-y).
\end{equation}
As shown in Feinberg \cite{feinberg} and Horn \cite{horn}, a necessary condition for complex balanced steady states to exist is that each connected component of the fast subnetwork is strongly connected; this is what is known as a weakly reversible reaction network.  
In this case, if the parameters $k_{y\rightarrow y'}$ satisfy certain algebraic conditions, then there are positive complex balanced steady states and any positive steady state is complex balanced. 
Furthermore, if the deficiency of the fast subnetwork is zero, then any positive steady state is complex balanced, independent of the values of the parameters.

The set $Z$ of positive  complex balanced steady states  agrees with the solution set of a collection of binomial equations of the form
$$K_{ij} x^{y_i}  - K_{ji} x^{y_j} =0,    \quad  x\in \R^n_{>0},$$
for every pair of nodes $y_i,y_j$ in the same connected component of the reaction network, and with $K_{ij}$ and $K_{ji}$ positive polynomials in the parameters $k_{y\rightarrow y'}$ for the reactions in the same connected component \cite{CDSS}.  
This implies that the column span of the matrix $M$ above agrees with the column span of $N_{\mathsf f}$, and therefore $\ker M^{\rm tr}$ has rank $s$. As a suitable matrix $B$ one can choose $L_{\mathsf f}$ as in Proposition \ref{prop:R}. We thus obtain an explicit expression for the reduced system.
\begin{proposition}\label{closedformularreduction}
Assume that the fast subsystem \eqref{eq:h0:fast} has a positive complex balanced steady state $x^*$.
Then:
\begin{enumerate}[(a)]
\item With the notation of Theorem \ref{Parthm} and Proposition \ref{prop:R}, there is a parameterization \eqref{binompar} of the critical manifold with $B=L_{\mathsf f}$, parameter space $\mathbb R_{>0}^s$ and
\[
R(v) = 
 \diag(v)   \big(L_{\mathsf f}\,  \diag(x^* \circ v^{L_{\mathsf f}}) L_{\mathsf f}^{\rm tr}  \big) ^{-1}L_{\mathsf f}.
\]
\item The reduced system is given by:  
\begin{equation}\label{combalred}
\begin{array}{rcl}
 v'&=&R(v)\cdot h^{(1)}(\Phi(v))=R(v)\cdot N_{\mathsf s}\cdot \left(K_s\circ (x^*)^{Y_{\mathsf s}}\circ v^{L_{\mathsf f}\cdot Y_{\mathsf s}}\right)\\
 &=&\diag(v)   \big(L_{\mathsf f}\,  \diag(x^* \circ v^{L_{\mathsf f}}) L_{\mathsf f}^{\rm tr}  \big) ^{-1}L_{\mathsf f}\cdot N_{\mathsf s}\cdot \left(K_{\mathsf s}\circ (x^*)^{Y_{\mathsf s}}\circ v^{L_{\mathsf f}\cdot Y_{\mathsf s}}\right).
\end{array}
\end{equation}
\end{enumerate}
\end{proposition}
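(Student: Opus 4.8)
The plan is to assemble Proposition~\ref{closedformularreduction} from two ingredients already prepared in the excerpt: the monomial parameterization machinery of Subsection~3.3.2 and the closed formula for $R(v)$ coming from Proposition~\ref{prop:R}. For part (a), I would first invoke the cited facts about complex balanced steady states: the existence of a positive complex balanced $x^*$ forces weak reversibility of the fast subnetwork, and the set $Z$ of positive complex balanced steady states coincides with the solution set of the binomial system $K_{ij}x^{y_i}-K_{ji}x^{y_j}=0$ over pairs of nodes in the same connected component. From this one reads off that the matrix $M$ (whose columns are the exponent differences $y_i-y_j$) has column span equal to the column span of $N_{\mathsf f}$; hence $\ker M^{\rm tr}=\ker N_{\mathsf f}^{\rm tr}$ has dimension exactly $s=n-r$ (using the blanket hypothesis $\rank N_{\mathsf f}=r$). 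This is precisely the condition $d=s$ needed for the monomial parameterization \eqref{binompar} to land in the setting of Subsection~\ref{sec:parameterized}. Since $\ker M^{\rm tr}=\ker N_{\mathsf f}^{\rm tr}$, the matrix $L_{\mathsf f}$ from Proposition~\ref{prop:R} — whose rows form a basis of the left kernel of $N_{\mathsf f}$ — is a legitimate choice of the matrix $B$ (its row span is $\ker M^{\rm tr}$, and since its rows are a basis, $\ker B^{\rm tr}=\{0\}$ and $\rank B=s$). Substituting $B=L_{\mathsf f}$ into the $D\Phi(v)$ identity and then into the already-derived formula $R(v)=\diag(v)\big(L_{\mathsf f}\,\diag(x^*\circ v^B)B^{\rm tr}\big)^{-1}L_{\mathsf f}$ yields the asserted expression for $R(v)$.

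For part (b), the reduced system is $v'=R(v)\cdot h^{(1)}(\Phi(v))$ by Theorem~\ref{Parthm}(b) (equivalently Proposition~\ref{prop:R}), so it only remains to compute $h^{(1)}(\Phi(v))$ explicitly. Here $h^{(1)}(x)=N_{\mathsf s}\cdot(K_{\mathsf s}\circ x^{Y_{\mathsf s}})$ from \eqref{eq:h1:slow}, and one substitutes $x=\Phi(v)=x^*\circ v^{L_{\mathsf f}}$. The key elementary manipulation is the exponent law for the $(\cdot)^{M}$ notation: for a matrix $Y_{\mathsf s}$ with columns $y^{(1)},\dots$, one has $(x^*\circ v^{L_{\mathsf f}})^{Y_{\mathsf s}} = (x^*)^{Y_{\mathsf s}}\circ (v^{L_{\mathsf f}})^{Y_{\mathsf s}}$, and then $(v^{L_{\mathsf f}})^{y} = v^{L_{\mathsf f} y}$ column by column, so that $(v^{L_{\mathsf f}})^{Y_{\mathsf s}} = v^{L_{\mathsf f}Y_{\mathsf s}}$. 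Plugging this into $h^{(1)}(\Phi(v))=N_{\mathsf s}\cdot\big(K_{\mathsf s}\circ(x^*)^{Y_{\mathsf s}}\circ v^{L_{\mathsf f}Y_{\mathsf s}}\big)$ and combining with the formula for $R(v)$ from part (a) gives \eqref{combalred}.

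The only genuinely nontrivial point — and the place I would be most careful — is the identification $\ker M^{\rm tr}=\ker N_{\mathsf f}^{\rm tr}$ (equivalently, that the column span of $M$ equals the column span of $N_{\mathsf f}$). The inclusion of the $M$-column-span in the $N_{\mathsf f}$-column-span is clear since each $y_i-y_j$ is a difference of nodes in one connected component and hence lies in the stoichiometric subspace of the fast network; the reverse inclusion uses that for complex balanced networks the stoichiometric subspace is spanned by such node differences within components (weak reversibility ensures every edge $y\to y'$ has both endpoints in a single strongly connected component, so $y'-y$ is an $M$-column), and then the blanket hypothesis $\rank N_{\mathsf f}=r=n-s$ pins the dimension down to $s$. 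Everything else — the exponent bookkeeping, the substitution into $D\Phi$, the matrix inversion rearrangement moving $\diag(1/v)$ out as $\diag(v)$ — is routine and has in fact already been carried out in the running text preceding the proposition, so the proof is essentially a matter of specialization plus one substitution.
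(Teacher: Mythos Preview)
Your proposal is correct and follows essentially the same approach as the paper: part (a) is obtained by specializing the monomial parameterization and the $R(v)$ formula already derived in the running text to $B=L_{\mathsf f}$ (using that the column span of $M$ equals that of $N_{\mathsf f}$), and part (b) is the exponent-law computation $h^{(1)}(\Phi(v))=N_{\mathsf s}\cdot\big(K_{\mathsf s}\circ(x^*)^{Y_{\mathsf s}}\circ v^{L_{\mathsf f}Y_{\mathsf s}}\big)$ combined with part (a). Your care about the identification $\ker M^{\rm tr}=\ker N_{\mathsf f}^{\rm tr}$ is well placed; the paper asserts this in the paragraph preceding the proposition and then simply declares part (a) ``clear.''
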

\begin{proof}
Part (a) is clear, while part (b) follows immediately with part (a) and 
  \begin{align*}
h^{(1)}(\Phi(v))&=h^{(1)}\left(x^*\circ v^{L_{\mathsf f}}\right)=N_{\mathsf s}\cdot \left(K_{\mathsf s}\circ \left(x^*\circ v^{L_{\mathsf f}}\right)^{Y_{\mathsf s}}\right)\\
&=N_{\mathsf s}\cdot \left(K_{\mathsf s}\circ (x^*)^{Y_{\mathsf s}}\circ (v^{L_{\mathsf f}})^{Y_{\mathsf s}}\right)\\
&=N_{\mathsf s}\cdot \left(K_{\mathsf s}\circ (x^*)^{Y_{\mathsf s}}\circ v^{L_{\mathsf f}\cdot Y_{\mathsf s}}\right).
  \end{align*}
\end{proof}
%%%%%%%%%%%%%%%%%%%%%%%%%%%%%%%%%%%
\subsubsection{Attractivity of the critical manifold}

In the discussion so far we restricted attention to computing a reduced system on the critical manifold and did not address the question whether the attractivity condition from Proposition \ref{baseprop}(a) is satisfied. Of course, Remarks \ref{rk:zero} and \ref{premark} are available, but due to our consideration of slow and fast reaction networks one also may resort to known properties of certain classes of reaction networks. There are general attractivity results available for complex balanced {\em mechanisms} as introduced by Horn \cite{horn}, which are of primary interest in our setting, since we only require positivity of reaction constants in our considerations. By Horn \cite{horn}, Theorem 4A a mechanism is complex balanced for all choices of reaction rate constants if and only if it is weakly reversible and has deficiency zero. For these systems Feinberg \cite{feinberg} proved in Remark C.2 that every steady state is linearly attractive (within its stoichiometric compatibility class). We therefore have:
\begin{proposition}\label{stabzprop}
  If the fast subsystem \eqref{eq:h0:fast} is weakly reversible and of deficiency zero, then all non-zero eigenvalues of the Jacobian have negative real part, hence {all} the conditions of Proposition \ref{baseprop}(a) hold. 
\end{proposition}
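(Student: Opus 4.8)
The plan is to reduce the statement entirely to two classical facts already recalled above — Horn's characterization of complex balancing and Feinberg's linear stability result for complex balanced steady states — and then to supply only the linear-algebraic bookkeeping needed to recover \emph{all three} bullets of Proposition~\ref{baseprop}(a), not merely the eigenvalue condition. \emph{First step.} By Horn \cite{horn}, Theorem~4A, a weakly reversible mechanism of deficiency zero is complex balanced for every choice of positive reaction rate constants; hence every $x^{*}\in\mathbb R^{n}_{>0}$ with $h^{(0)}(x^{*})=0$ is a complex balanced steady state of the fast subsystem \eqref{eq:h0:fast}. Write $S:={\rm Im}\,N_{\mathsf f}$ for the stoichiometric subspace of the fast subnetwork. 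Since $h^{(0)}(x)=N_{\mathsf f}w_{\mathsf f}(x)$ we have $Dh^{(0)}(x^{*})=N_{\mathsf f}\,Dw_{\mathsf f}(x^{*})$, so ${\rm Im}\,Dh^{(0)}(x^{*})\subseteq S$ and $S$ is invariant under $Dh^{(0)}(x^{*})$.

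\emph{Second step: the transversal Jacobian.} Next I would invoke Feinberg \cite{feinberg}, Remark~C.2: at the complex balanced steady state $x^{*}$ the point is linearly asymptotically stable within its stoichiometric compatibility class $x^{*}+S$, which means precisely that the restriction $Dh^{(0)}(x^{*})|_{S}$ has all its eigenvalues with negative real part; in particular it is a linear automorphism of $S$. From this all of Proposition~\ref{baseprop}(a) follows at $x^{*}$. Surjectivity of $Dh^{(0)}(x^{*})|_{S}$ onto $S$, together with ${\rm Im}\,Dh^{(0)}(x^{*})\subseteq S$, gives ${\rm Im}\,Dh^{(0)}(x^{*})=S$, hence $\rank Dh^{(0)}(x^{*})=\dim S=\rank N_{\mathsf f}=:r$. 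Injectivity of $Dh^{(0)}(x^{*})|_{S}$ gives ${\rm Ker}\,Dh^{(0)}(x^{*})\cap S=\{0\}$, and since $\dim{\rm Ker}\,Dh^{(0)}(x^{*})=n-r=s=n-\dim S$, a dimension count yields the direct sum $\mathbb R^{n}={\rm Ker}\,Dh^{(0)}(x^{*})\oplus S={\rm Ker}\,Dh^{(0)}(x^{*})\oplus{\rm Im}\,Dh^{(0)}(x^{*})$ — the second bullet. With respect to this splitting $Dh^{(0)}(x^{*})$ is block diagonal, zero on the kernel and equal to $Dh^{(0)}(x^{*})|_{S}$ on $S$; hence $0$ is a semisimple eigenvalue and the nonzero eigenvalues are exactly those of $Dh^{(0)}(x^{*})|_{S}$, all with negative real part — which is the third bullet and, restated, the assertion of the proposition about the Jacobian.

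\emph{Third step: the neighborhood and the submanifold property.} Finally I would choose the neighborhood $\widetilde U$ of $x^{*}$ inside $\mathbb R^{n}_{>0}$, so that every point of $Z={\mathcal V}(h^{(0)})\cap\widetilde U$ is a positive steady state, hence complex balanced, and the first two steps apply at each such point; in particular $\rank Dh^{(0)}(x)=r$ on $Z$. That $Z$ is an $s$-dimensional submanifold then follows either from the constant-rank theorem or, more explicitly, from the monomial parameterization \eqref{binompar} with $B=L_{\mathsf f}$ of rank $s$ discussed above: the positive complex balanced locus is the image of the immersion $v\mapsto x^{*}\circ v^{L_{\mathsf f}}$, and by uniqueness of the complex balanced steady state in each stoichiometric class it coincides with the full set of positive fast steady states. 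This gives the first bullet, and hence all hypotheses of Proposition~\ref{baseprop}(a).

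I do not expect a genuine obstacle here: the analytic substance is carried entirely by the cited theorems of Horn and Feinberg. The only delicate points are bookkeeping ones — extracting from Remark~C.2 not just the negativity of the transversal eigenvalues but also the surjectivity of $Dh^{(0)}(x^{*})|_{S}$ onto $S$, which is what feeds the rank equality and the kernel--image splitting (and thus the first two bullets, not only the third), and keeping $\widetilde U$ inside the positive orthant so that the complex balancing hypothesis is available at every point of $Z$.
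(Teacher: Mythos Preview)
Your proof is correct and rests on exactly the same two citations the paper invokes --- Horn \cite{horn}, Theorem~4A and Feinberg \cite{feinberg}, Remark~C.2 --- which constitute the entirety of the paper's own justification (the proposition is stated without a separate proof, the preceding paragraph serving as such). You go further than the paper by spelling out the linear-algebraic bookkeeping that converts ``linearly asymptotically stable within the stoichiometric class'' into the three bullets of Proposition~\ref{baseprop}(a) and the blanket hypothesis $\rank N_{\mathsf f}=\rank Dh^{(0)}(x)$; the paper leaves this implicit.
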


%%%%%%%%%%%%%%%%%%%%%%%%%%%%%%%%%%%%%%%%%%%%%%%
\subsubsection{Examples}

\begin{example}
We consider the following reaction network with mass action kinetics, where the numbers $k_{y\rightarrow y'}$ are written as labels of the reactions:
\begin{align}
X_1  & \ce{<=>[k_1][k_2]} X_2 \nonumber \\
 X_2+X_3 & \ce{<=>[k_3][k_4]} X_5  \ce{<=>[k_5][k_6]} X_1+X_4  \label{ex:twocomp}\\
 X_4 & \ce{->[k_7] }X_3 \qquad X_1+X_3 \ce{<=>[k_8][k_9]} X_6. \nonumber
\end{align}
This network is a two-component system where $X_1,X_2$ are the unphosphorylated and phosphorylated forms of the histidine kinase and $X_3,X_4$ are the unphosphorylated and phosphorylated forms of the response regulator. Further we have a dead-end complex between the unphosphorylated forms of both proteins. 

We now look at the slow-fast scenario where the fast reactions are those with labels $k_1,\dots,k_6$, such that  the fast subnetwork is
\begin{align}
X_1  & \ce{<=>[k_1][k_2]} X_2&
 X_2+X_3 & \ce{<=>[k_3][k_4]} X_5  \ce{<=>[k_5][k_6]} X_1+X_4   &   X_6 , \label{eq:fast}
\end{align}
and the slow reactions  have labels $k_7,k_8,k_9$.  With this choice of slow-fast reactions, we have
\[
 N_{\mathsf f}=\begin{pmatrix}-1 & 1 & 0 & 0 & 1 & -1\\ 1 & -1 & -1 & 1 & 0 & 0\\ 0 & 0 & -1 & 1 & 0 & 0\\ 0 & 0 & 0 & 0 & 1 & -1\\ 0 & 0 & 1 & -1 & -1 & 1\\ 0 & 0 & 0 & 0 & 0 & 0\end{pmatrix},\qquad N_{\mathsf s}=\begin{pmatrix}0 & -1 & 1\\ 0 & 0 & 0\\ 1 & -1 & 1\\ -1 & 0 & 0\\ 0 & 0 & 0\\ 0 & 1 & -1\end{pmatrix}
\]
and
\[
Y_{\mathsf s}=\begin{pmatrix} 0 & 1 & 0 \\ 0 & 0 & 0 \\ 0 & 1 & 0 \\ 1 & 0 & 0 \\ 0 & 0 & 0 \\ 0 & 0 & 1\end{pmatrix},\quad K_{\mathsf s}=\begin{pmatrix}k_7\\k_8\\k_9\end{pmatrix}.
\]
The fast reaction network in \eqref{eq:fast} has $6$ nodes  and three connected components, and the rank of $N_{\mathsf f}$ is $r=3$ (hence also $s=3$). Therefore the deficiency is zero and any positive steady state, that is, any element of $Z$, is complex balanced and a solution to a set of binomial equations.
Under mass action, the steady states of this fast subnetwork are the solutions to 
\begin{align}
-k_3x_2x_3+k_1x_1-k_2x_2+k_{{4}}x_5 &=0, \nonumber \\ 
-k_6 x_{{1}}x_{{4}}+k_{{5}}x_{{5}}&=0,  \label{eq:TC:ss} \\
k_{{3}}x_{{2}}x_{{3}}+k_{{6}}x_{{4}}x_{{1}}-k_{{4}}x_{{5}}-k_{{5}}x_{{5}}&=0,\nonumber
\end{align} 
and we easily verify that 
\[
x^*=\big(1,\tfrac{k_1}{k_2},  \tfrac{k_2k_4k_6}{k_1k_3k_5} , 1,    \tfrac{k_6}{k_5},1\big)^{\rm tr}
\]
 is a positive steady state of the fast subnetwork. 
With Proposition \ref{closedformularreduction}(b) the reduced system can be computed. We choose
\begin{equation}\label{eq:Lf}
L_{\mathsf f}= \begin{pmatrix}
1 & 1 & 0 & 0 & 1 & 0 \\
0 & 0 & 1 & 1 & 1 & 0 \\ 
0 & 0 & 0 & 0 & 0 & 1
\end{pmatrix}
\end{equation}
and obtain the following parameterization of $Z$:
\[
 \Phi(v)= x^* \circ \begin{pmatrix}v_1\\ v_2\\ v_3\end{pmatrix}^{\hspace{-3pt}L_{\mathsf f}} = x^* \circ \begin{pmatrix}v_1\\ v_1\\ v_2\\ v_2 \\ v_1v_2\\ v_3\end{pmatrix}=
  \begin{pmatrix}v_1\\[4pt] \tfrac{k_1}{k_2}v_1\\[4pt]  \tfrac{k_2k_4k_6}{k_1k_3k_5} v_2\\[4pt] v_2\\ \tfrac{k_6}{k_5}v_1v_2\\[4pt] v_3\end{pmatrix}.
\]
Using equation \eqref{combalred}, the reduced system is found:
\begin{align*}
 v'=\frac{1}{\xi}\cdot \begin{pmatrix} \tfrac{1}{k_1k_3}k_2 (k_1 k_3 k_5+k_2 k_4 k_6) (-k_2 k_4 k_6k_8 v_1v_2 + k_1 k_3 k_5 k_9 v_3)\\
 k_5 (k_1+k_2) (-k_2 k_4 k_6k_8 v_1v_2 + k_1 k_3 k_5 k_9 v_3) \\
\tfrac{\xi}{k_1k_3k_5}  \cdot   (k_2 k_4 k_6k_8 v_1v_2 - k_1 k_3 k_5 k_9 v_3) \end{pmatrix},
\end{align*}
where $\xi$ is given by 
\begin{align*}
\xi=& k_1( k_1  k_3  k_5 +   k_2  k_3  k_5)  k_6 v_1+ k_2(  k_1    k_3  k_5  +   k_2  k_4  k_6)k_6 v_2\\
     &+ k_5(k_1+k_2) (k_1 k_3 k_5+k_2 k_4 k_6).
\end{align*}

In addition, we conclude by Proposition \ref{stabzprop} that  all non-zero eigenvalues of $Dh^{(0)}$ have negative real part on $Z$.

Observe that the parameterization $\Phi(v)$ is not unique. For example, choosing another starting steady state
$$x^*=\Big(1, \frac{k_1}{k_2},1,\frac {k_1k_3k_5  }{k_2k_4k_6},\frac{k_1k_3 }{k_2k_4},1\Big)^{\rm tr},$$
we obtain the parameterization
$$\Phi(v)=\Big( v_1,  \tfrac {k_1v_1}{k_2}, v_2 , \tfrac {k_1k_3k_5 v_2 }{k_2k_4k_6},\tfrac {k_1k_3 v_1 v_2 }{k_2k_4} ,v_3\Big)^{\rm tr},$$
and the reduced system
\begin{align*}
v_1' &= -\tfrac{k_2k_4}{q(v)}  (k_{1}k_{3}k_{5}+k_{2}k_{4}k_{6})(k_8v_1v_2 - k_9v_3) \\
v_2' &= -\tfrac{k_2k_4}{q(v)}k_4k_6  (k_{1}+k_{2})(k_8v_1v_2 - k_9v_3)  \\
v_3'&= k_8v_1v_2 - k_9v_3,
\end{align*}
with 
\begin{align*}
q(v) & =k_{1}k_{3} (k_{1}k_{3}k_{5}+k_{2}k_{4}k_{6})v_{2} +
k_1k_3k_4k_6(k_{1}+k_{2})v_{1} \\ & +
k_4(k_1+k_2)(k_{1}k_{3}k_{5}+k_{2}k_{4}k_{6}).
\end{align*}
The same parameterization is obtained by eliminating $x_2,x_4,x_5$  after realizing 
 that the set of species $\{X_2,X_4,X_5\}$ satisfies (i), (ii) and (iii) in Subsection~\ref{subnoninter}. 
 \end{example}

\begin{example} 
If we remove the reactions with label $k_8,k_9$ from the network \eqref{ex:twocomp}, then the stoichiometric matrices of both the fast subnetwork and the full network have rank $3$. Hence, if the reduction with a parameterized critical manifold is possible, the reduced system is $v'=0$. 
\end{example}

\begin{example}
We analyse the reaction network
\begin{align}
 X_1+X_2  & \ce{<=>[k_1][k_2]} X_3 \ce{->[k_3]} X_4  \ce{<=>[k_4][k_5]} X_1+X_5  \ce{->[k_7]} X_1+X_6  \label{ex:non_monomial}\\
X_5 & \ce{->[k_6]} X_2\qquad  X_6  \ce{->[k_8] }X_5. \nonumber
\end{align}
This system can be interpreted as a dual phosphorylation cycle with $X_1$ the kinase catalysing the phosphorylation of a substrate $S$ with two phosphorylation sites. Then $X_2$, $X_5$, $X_6$ correspond to the phosphoforms with no, one, two phosphate groups respectively, and $X_3$ and $X_4$ are intermediate enzyme-substrate forms. Dephosphorylation proceeds without a phosphatase. We let the fast system be all reactions involved in the conversion $X_2 \leftrightarrow X_5$, namely those with label $k_1,\dots,k_6$.
Hence the reactions with label $k_7,k_8$ are slow. With this choice, we have
\begin{align*}
 N_{\mathsf f}=\begin{pmatrix}-1&1&0&1&-1&0\\-1&1&0&0&0&1\\1&-1&-1&0&0&0\\0&0&1&-1&1&0\\0&0&0&1&-1&-1\\0&0&0&0&0&0\end{pmatrix},
\qquad 
 L_{\mathsf f}=\begin{pmatrix}0&0&0&0&0&1\\-1&1&0&0&1&0\\1&0&1&1&0&0\end{pmatrix},
\end{align*}
and 
\begin{align*}
h^{(0)}(x) &= \big( -k_1x_1x_2-k_5x_1x_5+k_2x_3+k_4x_4, -k_1x_1x_2+k_2x_3+k_6x_5, \\ & \qquad k_1x_1x_2-k_2x_3-k_3x_3, k_5x_1x_5+k_3x_3-k_4x_4, \\ & \qquad -k_5x_1x_5+k_4x_4-k_6x_5, 0 \big)^{\rm tr} \\
 h^{(1)}(x) & =\big( 0, 0, 0, 0, -k_{7}x_1x_5+k_8x_6, k_{7}x_1x_5 - k_8x_6\big)^{\rm tr}.
\end{align*}

The fast network has deficiency $1$, since the rank of $N_{\mathsf f}$ is $3$, and the network has 6 nodes and two connected components. Thus, the steady states are not complex balanced for all $k$. Instead, we observe that the set
$\{X_3,X_4,X_5\}$ satisfies (i)-(iii) in Subsection~\ref{subnoninter}. Indeed, (i) and (ii) are easy to check. For (iii), 
the induced network obtained after setting the species not in this set to zero is  
\begin{center}
\begin{minipage}[h]{0.4\textwidth}
\xymatrix{
 0   \ar@<0.3ex>[r] & X_3  \ar@<0.3ex>[l] \ar[d] \\
 X_5   \ar[u] \ar@<0.3ex>[r]      & X_4  \ar@<0.3ex>[l]   }
 \end{minipage}
\end{center}%
and clearly there is directed path to $0$ from every species. 
This implies that $x_3,x_4,x_5$ can be solved from the system $h^{(0)}(x)_{3,4,5}=0$ to obtain the following parameterization  (where $v_1=x_1,v_2=x_2,v_3=x_6$)
\begin{align*}
 \Phi \colon (v_1,v_2,v_3)\mapsto \begin{pmatrix}v_1\\v_2\\\frac{k_1}{k_2+k_3}v_1v_2\\\frac{k_1k_3}{k_4k_6(k_2+k_3)}(k_5v_1+k_6)v_1v_2\\\frac{k_1k_3}{k_6(k_2+k_3)}v_1v_2\\v_3\end{pmatrix},
\end{align*}
which has an entry that is not monomial. Next we compute the matrix $R(v)$ and the reduced system using Proposition~\ref{prop:R}. We   have 
\begin{equation*}
 D\Phi(v)= \begin{pmatrix}
1&0&0\\ 0&1&0
\\ \frac{k_1v_2}{k_2+k_3} & \frac{k_1v_1}{k_2+k_3}  &0 \\[4pt]
\frac{ k_1k_3 ( 2 k_5v_1+k_6) v_2 }{k_4k_6( k_2+k_3 )}  & \frac{( k_5 v_1+k_6 ) k_1k_3v_1}{k_4k_6( k_2+k_3 )  } &0\\[4pt] \frac {k_1k_3v_2}{k_6(k_2+k_3) }  & \frac {k_1k_3v_1}{k_6(k_2+k_3) }  &0 \\
0&0&1 
 \end{pmatrix},
\end{equation*}
and using $ R(v_1,v_2,v_3) = \big(L_{\mathsf f}\, D\Phi(v)\big) ^{-1}   L_{\mathsf f}$
 the reduced system  is given by
\begin{align*}
 \begin{pmatrix} v'_1\\  v'_2\\  v'_3\end{pmatrix}=
 \frac{\xi_1}{\xi_2}
 \begin{pmatrix} 
 k_{1}  (k_{3} k_{5} v_{1}+k_{3} k_{6}+k_{4} k_{6}) v_{1}\\
-(2 k_{1} k_{3} k_{5} v_{1} v_{2}+k_{1} k_{3} k_{6} v_{2}+k_{1} k_{4} k_{6} v_{2}+k_{2} k_{4} k_{6}+k_{3} k_{4} k_{6})\\
 \frac{\xi_{2}}{ k_{6} (k_{3}+k_{2})},
 \end{pmatrix}
\end{align*}
where
\begin{align*}
\xi_1 &= k_{1} k_{3} k_{7} v_{1}^2 v_{2}- (k_{2}+k_3) k_{6} k_{8} v_{3}  \\
 \xi_2&= k_{1}^2  k_{3}^2  k_{5} v_{1}^2 v_{2} + (k_2+k_3) \Big(
   k_{1}  k_3k_5 k_6 v_{1}^2
  +2  k_{1}    k_{3}  k_{5}  k_{6}   v_{1} v_{2}
  \\
     & \quad + k_{1} k_6 (k_3k_4+k_3k_6+k_4k_6) v_{1}
    + k_{1}  (k_3+k_4)k_{6}^2    v_{2}
     +  k_4(k_2+k_3)k_6^2\Big).
\end{align*}

We next verify that the eigenvalue condition in Proposition~\ref{baseprop}(a) is satisfied. To this end we check the eigenvalues of the matrix $A(x)=D\mu(x)\cdot P(x)$ with 
$$ P= \begin{pmatrix} 1 & 0 & 0 \\ 0 & 1 & 0 \\ -1 & 0 & -1 \\ 0 & 0 & 1 \\ 1 & -1 & 0 \\0 & 0 & 0 \end{pmatrix},\qquad 
\mu(x)= \begin{pmatrix}    -k_1x_1x_2-k_5x_1x_5+k_2x_3+k_4x_4 \\  -k_1x_1x_2+k_2x_3+k_6x_5 \\  k_5x_1x_5+k_3x_3-k_4x_4 \end{pmatrix}.$$
Using the Routh-Hurwitz conditions (see Gantmacher \cite{gant}, Ch.~V, \S 6) for its characteristic polynomial $\chi_A(\lambda)=\lambda^3+\sigma_1\lambda^2+\sigma_2\lambda+\sigma_3$ of degree $3$, we obtain:
\begin{align*}
\sigma_1=&  k_1x_1+k_1x_2+k_5x_1+k_5x_5 + k_2+k_3+k_4+k_6 \\
\sigma_2=&k_{1}k_{5} x_1(x_{1}+ x_{2}+ x_{5})+ ( k_{1}k_{3}+k_{1}k_{4}+k_{1}k_{6}+k
_{2}k_{5}+k_{3}k_{5} ) x_{1} \\ & \quad +k_{1} ( k_{3}+k_{4}+k_{6} ) x_{2}+k_{5} ( k_{2}+k_{3}+k_{6} ) x_{5}+
(k_{2}+k_3)(k_{4}+k_{6})+k_{4}k_{6}
\\
\sigma_3=&k_{1}k_{3}k_{5} x_1(x_{1}+x_{2}+x_{5})+k_{1} ( k_{3}k_{4}+k_{3}k_{6}+k_{4}k_{6} ) x_{1}\\ & \quad+k_{1}k_{6} ( k_{3}+k_{4} ) x_{2}+k_{6} ( k_{2}+k_{3} ) (k_{5}x_{5}+k_{4}).
\end{align*}
{In order to verify that all eigenvalues have negative real part we use the Hurwitz conditions for polynomials of degree three:
\begin{align*}
 \sigma_1>0,\quad\sigma_3>0,\quad\sigma_1\cdot \sigma_2-\sigma_3>0.
\end{align*}
The three expressions on the left side of each inequality are polynomials in the parameters and $x$ with all coefficients positive, and hence are positive when evaluated at positive values of $k$ and $x$.}
 \end{example}

\section*{Appendix}
For convenient reference, we record a basically known result.
\begin{lemma}\label{linalglem}
Let $0<s<n$ and $A\in \mathbb R^{n\times s}$, $B\in \mathbb R^{s\times n}$ such that 
\(
{\rm rank}\,AB=s\text{   and   } (AB)^2=AB.
\)
Then $BA=I_s$.
\end{lemma}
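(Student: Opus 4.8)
The plan is to exploit the two hypotheses on $P := AB$, namely that it is idempotent and of full rank $s$, to pin down the behavior of $A$ and $B$ on the relevant subspaces. First I would observe that since $\operatorname{rank} AB = s$ and $AB \in \mathbb{R}^{n\times n}$ factors through $\mathbb{R}^s$, both $A$ and $B$ must have rank exactly $s$; in particular $A$ is injective as a linear map $\mathbb{R}^s \to \mathbb{R}^n$ and $B$ is surjective. This is the structural fact that makes the cancellation possible.

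The key step is the identity $ABAB = AB$, which I would rewrite as $A\,(BA)\,B = A\,I_s\,B$. Now I want to cancel $A$ on the left and $B$ on the right. Left cancellation of $A$ is legitimate because $A$ is injective: from $A\big((BA)B\big) = A\big(I_s B\big)$ we get $(BA)B = I_s B = B$. Then right cancellation of $B$ is legitimate because $B$ is surjective (equivalently, $B$ has a right inverse, or: $B$ as a map has full row rank so $XB = YB \implies X = Y$ for $X,Y \in \mathbb{R}^{s\times s}$): from $(BA)B = I_s B$ we conclude $BA = I_s$.

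I expect the only thing requiring a word of care is the justification that rank considerations force $A$ to be injective and $B$ surjective — i.e. that $\operatorname{rank}(AB) = s$ with $A \in \mathbb{R}^{n\times s}$, $B \in \mathbb{R}^{s\times n}$ forces $\operatorname{rank} A = \operatorname{rank} B = s$. This is immediate from $\operatorname{rank}(AB) \le \min(\operatorname{rank} A, \operatorname{rank} B) \le s$ together with the hypothesis that this minimum chain is an equality. Once injectivity of $A$ and surjectivity of $B$ are in hand, the two cancellations are routine, so there is no real obstacle here; the lemma is genuinely elementary and the write-up is short.

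Here is the argument in full.

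\begin{proof}
Write $P := AB \in \mathbb{R}^{n\times n}$. Since $P = AB$ with $A \in \mathbb{R}^{n\times s}$ and $B \in \mathbb{R}^{s\times n}$, we have
\[
s = \operatorname{rank} P = \operatorname{rank}(AB) \le \min\{\operatorname{rank} A,\ \operatorname{rank} B\} \le s,
\]
so $\operatorname{rank} A = \operatorname{rank} B = s$. Thus $A$, viewed as a linear map $\mathbb{R}^s \to \mathbb{R}^n$, is injective, and $B$, viewed as a linear map $\mathbb{R}^n \to \mathbb{R}^s$, is surjective; in particular $B$ admits a right inverse $B' \in \mathbb{R}^{n\times s}$ with $BB' = I_s$, and $A$ admits a left inverse $A' \in \mathbb{R}^{s\times n}$ with $A'A = I_s$.

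From the idempotency $P^2 = P$ we obtain $A(BA)B = (AB)(AB) = AB = A\,I_s\,B$. Multiplying this identity on the left by $A'$ and on the right by $B'$ gives
\[
BA = A'A(BA)BB' = A'\big(A(BA)B\big)B' = A'\big(A\,I_s\,B\big)B' = (A'A)\,I_s\,(BB') = I_s,
\]
as claimed.
\end{proof}
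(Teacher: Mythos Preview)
Your proof is correct. It differs from the paper's argument: the paper works with the eigenspace decomposition of the idempotent $AB$, picking a basis $z_1,\ldots,z_s$ of the eigenspace for eigenvalue $1$, showing that $Bz_1,\ldots,Bz_s$ form a basis of $\mathbb{R}^s$, and then reading off $(BA)Bz_i=Bz_i$ from $ABz_i=z_i$. You instead extract from the rank hypothesis that $A$ is injective and $B$ is surjective, produce one-sided inverses $A'$ and $B'$, and cancel directly in $A(BA)B=A\,I_s\,B$. Your route is slightly more algebraic and arguably more streamlined, since it never needs to name the eigenspaces; the paper's route is a bit more geometric, making the projection structure explicit. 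Both are elementary and of comparable length.
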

\begin{proof}
Recall that $\mathbb R^n$ is the direct sum of the eigenspaces of $AB$ for eigenvalues $1$ and $0$. Let $z_1,\ldots,z_s$ be a basis of the former. Then $Bz_1,\ldots,Bz_s$ are linearly independent due to 
\[
\sum \lambda_i Bz_i=0\Rightarrow 0=\sum \lambda_i ABz_i=\sum\lambda_iz_i,
\]
hence they form a basis of $\mathbb R^s$. Finally, $ABz_i=z_i$ implies $(BA)Bz_i=Bz_i$ for $1\leq i\leq s$.
\end{proof}

\noindent{\bf Acknowledgements.} The work of NK and SW has been supported by the bilateral project ANR-17-CE40-0036 and DFG-391322026 SYMBIONT.  EF has been supported by the Independent Research Fund of Denmark. SW thanks the Department of Mathematical Sciences of the University of Copenhagen  for its hospitality during a research visit when essential parts of the present manuscript were devised. Likewise, EF thanks the hospitality of RWTH Aachen where this project was initiated.

\end{document}